\newtheorem{theorem}{Theorem}[section]
\newtheorem{corollary}[theorem]{corollary}
\newtheorem{proposition}[theorem]{Proposition}
\newtheorem{remark}{Remark}[section]
\newenvironment{proof}[1][Proof]{\textbf{#1.} }{\ \rule{0.5em}{0.5em}}
\begin{document}

\title{{\bf
 Singular Riesz measures on symmetric cones}\\
 {\small (Running title: \textbf{Singular Riesz measures})}}
\author {Abdelhamid Hassairi\footnote{Corresponding author.
 \textit{E-mail address: Abdelhamid.Hassairi@fss.rnu.tn}}$\;$  , and
 Sallouha
 Lajmi
\\{\footnotesize{\it
Sfax University, Tunisia.}}\\
    {\footnotesize{\it  }}}

 \date{}
 \maketitle
$\overline{\hspace{16.5cm}}$\vskip0.3cm

{\small {\bf Abstract}} { In this paper, we give an explicit
description of a class of positive measures on symmetric cones
defined by their Laplace transforms in the framework of the Riesz
integrals. This work is motivated by the importance of these
measures in probability theory and statistics since
they represent a generalization of the measures generating the famous Wishart distributions.}\\

 \small {\it{ Keywords:}} Jordan algebra, symmetric cone, generalized power, Laplace transform,
 Riesz measure.\\

 AMS Classification :     46G12, 28A25.\\
$\overline{\hspace{15cm}}$\vskip1cm

\section{Introduction }
Many interesting results of analysis on Jordan algebras and their
symmetric cones have been not only used as powerful mathematical
tools but also as sources of inspiration in the development of other
fields and more particularly of probability theory and Statistics.
This seems to be due in particular to the importance, in certain
areas of these fields, of the special case of the algebra of
symmetric matrices and of its symmetric cone of positive definite
matrices. For instance in 2001, Hassairi and Lajmi ([3]) have
introduced a class of natural exponential families of probability
distributions generated by measures related to the so-called Riesz
integrals in analysis on symmetric cones (see [1], p.137). These
measures and the generated probability distributions have been
respectively  called by these authors Riesz measures and Riesz
probability distributions. The Riesz measures on a symmetric cone
are in fact defined by their Laplace transforms in a fondamental
theorem due to Gindikin ([2]). Roughly speaking, the theorem says
that the generalized power $\Delta_{s}(\theta^{-1})$ defined on a
symmetric cone is the Laplace transform of a positive measure
$R_{s}$ if and only if $s$ is in a given set $\Xi$ of $\Bbb{R}^{r}$,
where $r$ is the rank of the algebra. When $s$ is in a well defined
part of $\Xi$, the Riesz measure $R_{s}$ is absolutely continuous
with respect to the Lebesgue measure on the symmetric cone and has a
density which is expressed in terms of the generalized power. For
the other elements $s$ of $\Xi$, the Riesz measure $R_{s}$ is
concentrated on the boundary of the cone, we will say that they are
singular Riesz measures. These measures were considered of
complicated nature and their structure has never been explicitly
determined although some among them have a probabilistic
interpretation and play an important role in multivariate
statistics. The aim of the present paper is to give an explicit
description of the Riesz measure $R_{s}$ for all $s$ in $\Xi$. The
question is very interesting from a mathematical point of view, in
fact, besides the use of many important known facts from the
analysis on symmetric cones, we have been led to develop many other
useful results. On the other hand, we think that the knowledge of
the way in which a singular Riesz measure is built should allow us
to give a statistical interpretation of the generated family of
probability distributions extending the one corresponding to the
Wishart.

\section{Preliminaries}
In this section, we first recall some facts concerning Jordan
algebras and their symmetric cones, for more details, we refer the
reader to the book of  Faraut and Kor\'{a}nyi (1994),([1]) which is
a complete and self-contained exposition on the subject. We then
establish some new results on symmetric cones which we need in the
description of the Riesz measures.

Recall that a Euclidean Jordan algebra is a Euclidean space $V$ with scalar product $%
\langle x,y\rangle $ and a bilinear map
\[
V\times V\rightarrow V,\text{ }\quad (x,y)\longmapsto x.y
\]
called Jordan product such that, for all $x,y,z$ in $V,$

\qquad i) $x.y=y.x,$

\qquad ii) $\left\langle x,y.z\right\rangle =\left\langle
x.y,z\right\rangle ,$

\qquad iii) there exists $e$ in $V$ such that $e.x=x,$

\qquad iv)$\;x.(x^{2}.y)=x^{2}.(x.y),\qquad $where we used the abbreviation $%
x^{2}=x.x.$

A Euclidean Jordan algebra is said to be simple\textit{\ }if it does
not contain a nontrivial ideal. Actually to each Euclidean simple
Jordan algebra, one attaches the set of Jordan squares
\[
\overline{\Omega }=\left\{ x^2;x\in V\right\} .
\]
Its interior $\Omega $ is a symmetric cone i.e. a cone which is

\qquad i) self dual, i.e., $\Omega =\left\{ x\in V;\text{ \quad
}\langle x,y\rangle >0\text{ \quad }\forall y\in \overline{\Omega
}\setminus \left\{ 0\right\} \right\} $

\qquad ii) homogeneous, i.e. the subgroup $G(\Omega )$ of the linear group $%
GL(V)\;$of linear automorphisms which preserve $\Omega $ acts
transitively on $\Omega .$

\qquad iii) salient, i.e., $\Omega $ does not contain a line.
Furthermore, it is irreducible in the sense that it is not the
product of two cones.

Let now $x$ be in $V$. If $L(x)$ is the endomorphism of $V$ ;
$y\longmapsto x.y$ and $P(x)=2L(x)^2-L(x^2)$, then $L(x)$ and $P(x)$
are symmetric for the Euclidean structure of $V$, the map
$x\longmapsto $ $P(x)$ is called the quadratic representation of
$V.$

A element $c$ of $V$ is said to be idempotent if $c^2=c$, it is a
primitive idempotent if furthermore $c\neq 0$ and is not the sum
$t+u$ of two non null idempotents $t$ and $u$ such that $t.u=0.$

A Jordan frame is a set $\left\{ c_{1},c_{2},.....c_{r}\right\} $
such that $\displaystyle\sum_{i=1}^r c_i=e$ and $c_{i}.c_{j}=\delta
_{i,j}c_{i},$ for $1\leq i,j\leq r.$ It is an important result that
the size $r$ of such a frame is a constant called the rank of $V$.
For any element $x$ of a Euclidean simple Jordan algebra, there
exists a Jordan frame $(c_i)_{1\leq i \leq r}$ and
$(\lambda_1,...,\lambda_r)\in \Bbb{R}^{r}$ such that
$x=\displaystyle\sum_{i=1}^r\lambda_i c_i$. The real numbers
$\lambda_1,\lambda_2,...,\lambda_r$ depend only on $x$, they are
called the eigenvalues of $x$ and this decomposition is called its
spectral decomposition. The trace and the determinant of $x$ are
then respectively defined by
$\textrm{tr}(x)=\displaystyle\sum_{i=1}^r\lambda_i$ and $\det
x=\displaystyle\prod_{i=1}^r\lambda_i$. If $c$ is a primitive
idempotent of $V$, the only possible eigenvalues of $ L(c)$ are 0 ,
$\frac{1}{2}$ and 1. The corresponding spaces are respectively
denoted by $V(c,0),$ $V(c,\frac{1}{2})$ and $V(c,1)$ and the
decomposition
\[
V=V(c,1)\oplus V(c,\frac{1}{2})\oplus V(c,0)
\]
is called the Peirce decomposition of $V$ with respect to $c.$ An
element $x$ of $V$ can then be written in a unique way as
$$x=x_{1}+x_{12}+x_{0}$$
with $x_{1}$ in $V(c,1)$, $x_{12}$ in $V(c,\frac{1}{2})$ and $x_{0}$
in $V(c,0)$, which is also called the Peirce decomposition of $x$
with respect to the idempotent $c$. We will denote $\Omega_{c}$ the
symmetric cone associated to the sub-algebra $V(c,1)$ and $\det_{c}$
the
determinant in this sub-algebra.\\
Suppose now that  $(c_{i})_{1\leq i\leq
r}$ is  a Jordan frame in $V$ and, for 1$\leq i,j\leq r,$ we set

\[
V_{ij}^{{}}=\left\{
\begin{array}{c}
V(c_{i},1)=\Bbb{R}c_{i}\qquad \quad \text{if }i=j \\
V(c_{i},\frac{1}{2})\cap V(c_{j},\frac{1}{2})\quad \text{if }i\neq j
\end{array}
\right.
\]
Then (See [1], Th.IV.2.1) we have the Peirce decomposition
$V=\displaystyle\bigoplus_{i\leq j} V_{ij}$  with respect to the
Jordan frame ($c_{i})_{1\leq i\leq r}.$ The dimension of $V_{ij}$
is, for $i\neq j,$ a constant $d$ called the Jordan constant, it is
related
to the dimension $n$ and the rank $r$ of $V$ by the relation $n=r+\frac{d}{2}%
r(r-1)$.\\
For $1\leq k\leq r,$ we have
$$ V(c_1+...+c_k, 1)= \bigoplus_{i\leq j \leq k}V_{ij} \, ,\,\,  V(c_1+...+c_k, \frac{1}{2})= \bigoplus_{1\leq i \leq k < j}V_{ij}$$
In the following proposition, we establish some useful intermediary
results.
\begin{proposition}
Let $c$ be an idempotent of $V$. Then

i) $\Omega_{c}=P(c)(\Omega)$

ii) for all $x$ in $V(c,1)$, $2L(x)_{|V(c,\frac{1}{2})}$ is an
endomorphism of $V(c,\frac{1}{2})$ with determinant equal to
$\det_{c}(x)^{d(r-k)}$, where $k$ is the rank of $c$

iii) if $x$ in $V(c,1)$ is invertible, then
$2L(x)_{|V(c,\frac{1}{2})}$ is an automorphism of $V(c,\frac{1}{2})$
with inverse equal to $2L(x^{-1})_{|V(c,\frac{1}{2})}$.

iv) for all $x$ in $V(c,1)$, $L(x^2)_{|V(c,\frac{1}{2})}=
\frac{1}{2} L(x^{2})_{|V(c,\frac{1}{2})}$
\end{proposition}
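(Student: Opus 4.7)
The plan is to fix a Jordan frame $(c_i)_{1\le i\le r}$ of $V$ adapted to $c$, meaning $c=c_1+\cdots+c_k$ where $k$ is the rank of $c$. This gives the fine Peirce decomposition $V=\bigoplus_{i\le j}V_{ij}$, and in particular $V(c,1)=\bigoplus_{i\le j\le k}V_{ij}$ together with $V(c,\tfrac12)=\bigoplus_{1\le i\le k<j\le r}V_{ij}$, which is the natural bookkeeping for all four assertions. A general $x\in V(c,1)$ will be reduced to the diagonal case $x=\sum_{i=1}^k\lambda_i c_i$ in such a frame, and the conclusion extended to arbitrary $x$ by density and the polynomial-identity principle, since both sides of (ii) and (iv) are polynomial in $x$.

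For (i) the starting point is $P(c)=2L(c)^2-L(c)$ (because $c^2=c$), which acts on the three Peirce subspaces $V(c,1)$, $V(c,\tfrac12)$, $V(c,0)$ as $\mathrm{Id}$, $0$, $0$ respectively; thus $P(c)$ is the Peirce projection onto $V(c,1)$. The inclusion $\Omega_c\subset P(c)(\Omega)$ then follows by writing $x\in\Omega_c$ in its spectral form and noting that $P(c)(x+c_{k+1}+\cdots+c_r)=x$, with the argument lying in $\Omega$. For the reverse inclusion I would invoke the self-duality of $\Omega_c$ inside $V(c,1)$: for $y\in\Omega$ and any nonzero $z\in\overline{\Omega_c}\subset\overline{\Omega}$ one has $P(c)z=z$, hence $\langle P(c)y,z\rangle=\langle y,P(c)z\rangle=\langle y,z\rangle>0$, so $P(c)y\in\Omega_c$.

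For (ii) and (iv) I would exploit the standard Peirce multiplication rules: $2L(c_m)|_{V_{ij}}=\mathrm{Id}$ if $m\in\{i,j\}$ and $0$ otherwise. First this confirms that $2L(x)|_{V(c,\frac12)}$ maps $V(c,\tfrac12)$ to itself for every $x\in V(c,1)$. Applied to $x=\sum_{i=1}^k\lambda_i c_i$ and to $V_{ij}$ with $i\le k<j$, the rule gives $2L(x)|_{V_{ij}}=\lambda_i\,\mathrm{Id}$, of determinant $\lambda_i^d$ since $\dim V_{ij}=d$. Taking the product over the $r-k$ indices $j>k$ yields $\det(2L(x)|_{V(c,\frac12)})=\prod_{i=1}^k\lambda_i^{d(r-k)}=\det_c(x)^{d(r-k)}$, proving (ii) in the diagonal case and hence in general. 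The identity in (iv), which I read as $L(x)^2|_{V(c,\frac12)}=\tfrac12 L(x^2)|_{V(c,\frac12)}$, is verified by the same coordinate computation: using $x^2=\sum\lambda_i^2 c_i$, both sides act as $\lambda_i^2/4\cdot\mathrm{Id}$ on each $V_{ij}$ with $i\le k<j$.

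For (iii) the nondegeneracy half is immediate from (ii). To obtain the explicit inverse, I would polarize (iv) by replacing $x$ with $x+y$, yielding $L(x)L(y)+L(y)L(x)=L(xy)$ on $V(c,\tfrac12)$ for all $x,y\in V(c,1)$. Taking $y=x^{-1}$ (the inverse inside the Jordan subalgebra $V(c,1)$, so that $xy=c$) gives $L(x)L(x^{-1})+L(x^{-1})L(x)=L(c)=\tfrac12\mathrm{Id}$ on $V(c,\tfrac12)$. Because the subalgebra of $V(c,1)$ generated by $x$ is associative and commutative, $x^{-1}$ is a polynomial in $x$ and a standard argument then shows that $L(x)$ and $L(x^{-1})$ commute on $V(c,\tfrac12)$; hence each product equals $\tfrac14\mathrm{Id}$, giving $2L(x^{-1})\cdot 2L(x)=\mathrm{Id}$ as claimed. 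The main obstacle I expect is precisely this final commutation step, which rests on the power-associativity of $x$ inside $V(c,1)$ rather than on the Peirce bookkeeping; once it is secured, the rest is a direct coordinate computation in the Peirce decomposition.
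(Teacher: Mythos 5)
Your proof is correct, and on parts (ii) and (iv) it is essentially the paper's argument: complete $c=c_1+\cdots+c_k$ to a Jordan frame diagonalizing $x$, use the Peirce rule $2L(c_m)|_{V_{ij}}=\delta_{m\in\{i,j\}}\mathrm{Id}$ to get $2L(x)|_{V_{ij}}=\lambda_i\,\mathrm{Id}$ for $i\le k<j$, and read off the determinant and the identity $L(x)^2=\tfrac12L(x^2)$ blockwise (you also correctly repaired the typo in the statement of (iv)). Two places where you genuinely diverge: in (i), for the inclusion $P(c)(\Omega)\subseteq\Omega_c$ the paper verifies that $L(P(c)x)$ is positive definite on $V(c,1)$ via Theorem III.2.1 of Faraut--Kor\'anyi, whereas you use self-duality of $\Omega_c$ together with the symmetry of $P(c)$ and $P(c)z=z$ for $z\in\overline{\Omega_c}$; the two are dual to each other and equally short, and yours avoids invoking the $L(x)\succ0$ characterization. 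In (iii) your route is much heavier than needed: the paper simply inverts each block, $2L(x^{-1})|_{V_{ij}}=\lambda_i^{-1}\mathrm{Id}$, which is one line given the frame already in hand, while you polarize (iv) to $L(x)L(y)+L(y)L(x)=L(xy)$ on $V(c,\tfrac12)$, take $y=x^{-1}$, and then need the commutation of $L(x)$ and $L(x^{-1})$ via power-associativity; this works (and is frame-free, which is its one advantage), but the ``obstacle'' you worry about dissolves if you just stay in the diagonalizing frame. One small caution: the ``density and polynomial-identity'' remark in your opening is both unnecessary and, taken literally, insufficient --- the elements diagonal in one \emph{fixed} frame form a proper linear subspace of $V(c,1)$, which is not Zariski-dense, so agreement there does not propagate by polynomiality; the correct (and standard) reduction, which your actual computation implicitly uses and which the paper makes explicit, is that \emph{every} $x\in V(c,1)$ admits a spectral decomposition in \emph{some} frame adapted to $c$, and both sides of (ii) and (iv) are frame-independent.
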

\begin{proof}

i) From Theorem III.2.1 in [1], we have that the symmetric cone of a Jordan algebra is the set of element $x$ in $V$ for which $L(x)$ is positive definite.\\
Let $x$ be in $\Omega$. For $ y\in V(c,1), \ y \neq 0$, we have:
\begin{eqnarray*}
\langle L(P(c)x)(y), y\rangle& =& \langle P(c)(x)y,y\rangle\\
& =&\langle P(c)(x),y^2\rangle\\
& = &\langle x,P(c)y^2\rangle \\
& =& \langle y, xy\rangle > 0
\end{eqnarray*}
Thus $ P(c)\Omega \subseteq \Omega_c$.\\
Now, let $ w \in \Omega_c$, then $ w + (e-c)$ is an element of
$\Omega$. Since $ P(c)(w + (e-c))= P(c)(w) = w$, we obtain that $
\Omega_c \subseteq P(c)\Omega$.

ii) Let $ x \in V(c, 1)$. It is known (see Faraut-Kor\'{a}nyi,
Prop IV.1.1) that $ V(c, 1). V(c, \frac{1}{2})\subseteq  V(c, \frac{1}{2})$,
hence $2L(x)_{|V(c,\frac{1}{2})}$ is an endomorphism of $V(c,\frac{1}{2})$.\\
 As $c$ is an idempotent of rank  $k$, there exit $ c_{1},c_{2},...,c_{k} $
 orthogonal idempotents and $ (\lambda_1,...,\lambda_k)\in \Bbb{R}^{k}$ such that
 $ c= \displaystyle\sum_{i=1}^k c_i$ and $ x= \displaystyle\sum_{i=1}^k \lambda_i
 c_i$, so that $ \det_{c}x= \displaystyle\prod_{i=1}^k \lambda_i$.
 Similarly, since $e-c$ is an idempotent with rank $r-k$,
 there exit $ c_{k+1},c_{k+2},...,c_{r}$ orthogonal idempotents  such that
  $ e-c= \displaystyle\sum_{i=1}^{r-k} c_{k+i}$.
  The system $ (c_i)_{1\leq i\leq r}$ is a Jordan frame of $V$.
  If for $ 1 \leq i \leq k$, we set $ V_{i,k+1}= \displaystyle\bigoplus_{j=k+1}^r V_{ij}$, then
  $ V(c, \frac{1}{2}) = \displaystyle\bigoplus_{i=1}^k V_{i,k+1}$.
  We can easily  show that  $ 2L(x)_{|V_{i,k+1}}= \lambda_i
  Id_{i,k+1}$,
  where $ Id_{i,k+1}$ is the identity on the space $ V_{i,k+1}$.
  As the dimension of $ V_{i,k+1}$ is equal to $(r-k)d$, we have
  that the determinant of $2L(x)_{|V(c,\frac{1}{2})}$ is equal to
  $ \prod_{i=1}^k \lambda_i^{(r-k)d} = (\det_c x)^{(r-k)d}$.

 iii) If $x$  is invertible in $V(c,1)$, then $ \lambda_1,...,\lambda_k $ are different from zero and
 $ x^{-1}= \displaystyle \sum_{p=1}^k \lambda_p^{-1} c_p$.
Therefore, $ 2L(x)_{|V_{i,k+1}}$ is an automorphism of $V_{i,k+1}$ with inverse
$ \lambda_i^{-1} Id_{i,k+1}$ and it follows that $ 2L(x)_{|V{(c,\frac{1}{2})}}$
is an automorphism of $ V{(c,\frac{1}{2})}$ with inverse $ 2L(x^{-1})_{|V{(c,\frac{1}{2})}}$.\\

iv) We have that $ x^{2}= \displaystyle \sum_{p=1}^k \lambda_p^{2}
c_p$ and for all $ 1 \leq i \leq k$, $ 2L(x)_{|V_{i,k+1}}= \lambda_i
Id_{i,k+1}$. Then
\begin{eqnarray*}
L(x)^{2} _{|V_{i,k+1}}&= & \frac{1}{4}\lambda_i^2 Id_{i,k+1}
  = \frac{1}{2}\frac{\lambda_i^2}{2} Id_{i,k+1}
  =\frac{1}{2} L(x^2)_{|V_{i,k+1}}.
\end{eqnarray*}
Thus, we conclude that  $ L(x^2)_{|V(c,\frac{1}{2})}= \frac{1}{2}
L(x^{2})_{|V(c,\frac{1}{2})}$.
\end{proof}\\

Besides, the results shown above, we will use the facts stated in
the following proposition due to Massam and Neher ([6]).
\begin{proposition}
Let $c$ be an idempotent of $V$, $u_{1}$ in $V(c,1)$, $v_{12}$ in $V(c,\frac{1}{2})$, and $u_{0}$, $z_{0}$ in $V(c,0)$. Then\\

i) $\langle u_{1},P(v_{12})z_{0}\rangle =2 \langle v_{12},L(z_{0})L(u_{1})v_{12}\rangle $\\

ii) $L(z_{0})L(u_{1})=L(u_{1})L(z_{0})$\\

iii) If $u_{1}\in \Omega_{c}$ and $z_{0}\in \Omega_{e-c}$, then
$L(u_{1})L(z_{0})_{|V(c,\frac{1}{2})}$ is a positive definite
endomorphism.
\end{proposition}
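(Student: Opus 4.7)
The plan is to handle the three claims in order, reducing each to routine computations in the Peirce decomposition with respect to $c$ and, when needed, refining to a Jordan frame that simultaneously diagonalizes $u_1 \in V(c,1)$ and $z_0 \in V(c,0)$.

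For (i), I will expand the quadratic representation as $P(v_{12})z_0 = 2v_{12}\cdot(v_{12}\cdot z_0) - v_{12}^2\cdot z_0$. The Peirce rule $V(c,1/2)\cdot V(c,1/2)\subset V(c,1)\oplus V(c,0)$ together with $V(c,1)\cdot V(c,0)=0$ forces $v_{12}^2\cdot z_0\in V(c,0)$, so this term is orthogonal to $u_1\in V(c,1)$ and drops out. Two applications of self-adjointness of $L(v_{12})$ and of $L(z_0)$ then give
$$
2\langle u_1,L(v_{12})^2 z_0\rangle = 2\langle L(v_{12})u_1,L(v_{12})z_0\rangle = 2\langle v_{12},L(z_0)L(u_1)v_{12}\rangle,
$$
which is the identity in (i).

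For (ii), I will first note from the Peirce rules that $L(u_1)$ annihilates $V(c,0)$ and $L(z_0)$ annihilates $V(c,1)$, so both products $L(u_1)L(z_0)$ and $L(z_0)L(u_1)$ vanish on $V(c,1)\oplus V(c,0)$; since each operator stabilizes $V(c,1/2)$, the commutation only needs to be checked there. Following the construction already used in Proposition 2.1, I concatenate a Jordan frame of the subalgebra $V(c,1)$ diagonalizing $u_1$ with a Jordan frame of $V(c,0)$ diagonalizing $z_0$ to obtain a Jordan frame $(c_i)_{1\le i\le r}$ of $V$ with $c=c_1+\cdots+c_k$. In the induced fine Peirce decomposition $V(c,1/2)=\bigoplus_{i\le k<j}V_{ij}$, each $L(c_p)$ acts on $V_{ij}$ as $\tfrac12\mathrm{Id}$ if $p\in\{i,j\}$ and as zero otherwise; hence both $L(u_1)$ and $L(z_0)$ act as scalar multiples of the identity on every block $V_{ij}$, and their restrictions commute block by block.

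For (iii), the same block diagonalization yields
$$
L(u_1)L(z_0)\big|_{V_{ij}} = \tfrac14\lambda_i\mu_j\,\mathrm{Id}_{V_{ij}},\qquad i\le k<j,
$$
where $\lambda_i,\mu_j>0$ because $u_1\in\Omega_c$ and $z_0\in\Omega_{e-c}$. By (ii) the operator $L(u_1)L(z_0)$ is self-adjoint on $V(c,1/2)$ (a product of two commuting self-adjoint operators), and its spectrum is the strictly positive set $\{\tfrac14\lambda_i\mu_j\}$, so it is positive definite.

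The only technical point that demands care is the concatenation step used in (ii) and (iii): namely, that a Jordan frame of $V(c,1)$ together with one of $V(c,0)$ genuinely forms a Jordan frame of $V$. This is the same device the authors already invoke in the proof of Proposition 2.1, and it rests on the fact that primitive idempotents of the Peirce subalgebras $V(c,1)$ and $V(c,0)$ remain primitive in $V$, a standard consequence of the spectral theorem together with the orthogonal decomposition $V=V(c,1)\oplus V(c,1/2)\oplus V(c,0)$.
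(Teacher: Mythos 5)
Your proof is correct. Note, however, that the paper itself offers no argument for this proposition: it is stated as a quoted result of Massam and Neher ([6]), so there is no "paper proof" to compare against. Your argument supplies a complete, self-contained verification, and it does so using exactly the toolkit the authors deploy elsewhere: part (i) is the expansion $P(v_{12})=2L(v_{12})^2-L(v_{12}^2)$ combined with the Peirce multiplication rules (the term $v_{12}^2\cdot z_0$ lands in $V(c,0)$ and is killed by orthogonality to $u_1$) and two uses of the self-adjointness of $L$; parts (ii) and (iii) rest on concatenating a frame of $V(c,1)$ diagonalizing $u_1$ with a frame of $V(c,0)$ diagonalizing $z_0$, which is precisely the device used in the proof of Proposition 2.1(ii), after which both operators act as scalars $\tfrac{\lambda_i}{2}$ and $\tfrac{\mu_j}{2}$ on each block $V_{ij}$ with $i\le k<j$, giving commutativity and, when $\lambda_i,\mu_j>0$, positive definiteness. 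The one point you rightly flag --- that primitive idempotents of the Peirce subalgebras $V(c,1)$ and $V(c,0)$ remain primitive in $V$, so the concatenation is a genuine Jordan frame --- is standard and is already assumed without comment in the paper's own Proposition 2.1, so your reliance on it is consistent with the level of detail of the surrounding text.
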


Throughout, we suppose that the Jordan frame $(c_{i})_{1\leq i\leq
r}$ is fixed in $V$.
 For 1$\leq k\leq r,$ let $P_{k}$ denote the orthogonal
projection on the Jordan subalgebra

\[
V^{(k)}=V(c_{1}+c_{2}+...+c_{k},1),
\]
$\det^{(k)}$ the determinant in the subalgebra $V^{(k)}$ and, for
$x$ in $V,$ $\Delta _{k}(x)=\det^{(k)}(P_{k}(x)).$ The real number
$\Delta _{k}(x)$ is called the principal minor of order $k$ of $x$
with respect to the frame ($c_{i})_{1\leq i\leq r}.$

\ The generalized power with respect to the Jordan frame
($c_{i})_{1\leq i\leq r}$\ is the polynomial function defined in $x$
of $V$\ by

\[
\Delta _{s}(x)=\Delta _{1}(x)^{s_{1}-s_{2}}\Delta
_{2}(x)^{s_{2}-s_{3}}...\Delta _{r}(x)^{s_{r}}.
\]
Note that $\Delta _{s}(x)=(\det (x))^{p}$ if $s=(p,p,....,p)$ with
$p\in \Bbb{R}$, and if $x=\displaystyle\sum_{i=i}^r \lambda
_{i}c_{i},$ then $\Delta _{s}(x)=\lambda _{1}^{s_{1}}\lambda
_{2}^{s_{2}}...\lambda _{r}^{s_{r}}$. It is also easy to see that
$\Delta _{s+s^{/}}(x)=\Delta _{s}(x).\Delta _{s^{/}}(x)$. In
particular, if $m\in \Bbb{R}$ and
$s+m=(s_{1}+m,s_{2}+m,.......,s_{r}+m),\;$we have $\Delta
_{s+m}(x)=\Delta _{s}(x)\det(x)^{m}.$\\
Now for the fixed Jordan frame $(c_{i})_{1\leq i \leq r}$, and for
$1\leq l \leq r$ we define
\begin{equation}\label{O1}
\sigma_{l}=\displaystyle\sum_{i=1}^lc_i,
 \end{equation}
and we suppose that $V(\sigma_{l},1)$ and $V(e-\sigma_{l},1)$ are
respectively equipped with the Jordan frames $(c_{i})_{1\leq i \leq
l}$ and $(c_{i})_{l+1\leq i \leq r}$. Then we have the following
result which allows the calculation of the general power of some
projections. For the proof we refer the reader to Hassairi and Lajmi ([4]).\\
\begin{theorem}\label{CDet}
Let $1\leq l \leq r-1$, and denote $\theta_{0}$ the orthogonal
projection of an element $\theta$ of the cone $\Omega$ on $ V(e-s_l,
1)$. Then

i)
$\Delta_{l}(\theta^{-1})=\det(\theta^{-1})\det_{e-\sigma_{l}}(\theta_{0}),$

ii) for $l+1\leq k \leq r-1$,
$\frac{\Delta_{k+1}(\theta^{-1})}{\Delta_{k}(\theta^{-1})}=\frac{\Delta_{k+1-l}^{e-\sigma_{l}}(\theta_{0}^{-1})}{\Delta_{k-l}^{e-\sigma_{l}}(\theta_{0}^{-1})}$,
and
$\frac{\Delta_{l+1}(\theta^{-1})}{\Delta_{l}(\theta^{-1})}=\Delta_{1}^{e-\sigma_{l}}(\theta_{0}^{-1})$.
\end{theorem}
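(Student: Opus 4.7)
The plan is to reduce both parts to a single Jordan-algebra Schur-complement identity, apply it once in $V$ to obtain (i), and then apply it a second time inside the sub-algebra $V(e-\sigma_l,1)$ to deduce (ii).

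First I would write the Peirce decomposition of $\theta$ with respect to the idempotent $\sigma_l$, namely $\theta=\theta_1+\theta_{12}+\theta_0$ with $\theta_1\in V(\sigma_l,1)$, $\theta_{12}\in V(\sigma_l,\tfrac12)$ and $\theta_0\in V(e-\sigma_l,1)$, so that $\theta_0$ is precisely the orthogonal projection named in the theorem. Since $\theta\in\Omega$, one has $\theta_0\in\Omega_{e-\sigma_l}$, so the inverse $\theta_0^{-1}$ in the sub-algebra $V(e-\sigma_l,1)$ exists. The key input is the Jordan Schur-complement identity: setting $\xi:=\theta_1-P(\theta_{12})\theta_0^{-1}$, one has $\xi\in\Omega_{\sigma_l}$ together with
\begin{equation*}
\det(\theta)=\det_{\sigma_l}(\xi)\det_{e-\sigma_l}(\theta_0), \qquad P_l(\theta^{-1})=\xi^{-1},
\end{equation*}
the second inverse being taken in $V(\sigma_l,1)$. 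Establishing these two identities is the main technical obstacle: they are classical (see Faraut-Kor\'anyi) but not stated verbatim in the preliminaries. A self-contained derivation would exhibit an explicit unipotent automorphism of $V$ preserving $\Omega$, built from $2L(\theta_0^{-1})\theta_{12}\in V(\sigma_l,\tfrac12)$ via Proposition 2.1\,(iii), that sends $\theta$ to the ``block-diagonal'' element $\xi+\theta_0$; the two identities then follow by applying $\det$ and $P_l$ respectively, with Proposition 2.2 providing the positivity needed to conclude $\xi\in\Omega_{\sigma_l}$.

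Part (i) is then immediate: by definition $\Delta_l(\theta^{-1})=\det^{(l)}(P_l(\theta^{-1}))=\det_{\sigma_l}(\xi^{-1})=\det_{\sigma_l}(\xi)^{-1}$, which, by the determinant identity, equals $\det_{e-\sigma_l}(\theta_0)/\det(\theta)=\det(\theta^{-1})\det_{e-\sigma_l}(\theta_0)$.

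For part (ii), I would use the observation that (i) holds with $l$ replaced by any $k\in\{1,\ldots,r-1\}$, giving
\begin{equation*}
\Delta_k(\theta^{-1})=\det(\theta^{-1})\det_{e-\sigma_k}(\theta^{(k)}),
\end{equation*}
where $\theta^{(k)}$ denotes the orthogonal projection of $\theta$ onto $V(e-\sigma_k,1)$. For $k\geq l$ one has $V(e-\sigma_k,1)\subseteq V(e-\sigma_l,1)$, so $\theta^{(k)}$ equals the projection of $\theta_0$ onto $V(e-\sigma_k,1)$; viewed inside the sub-algebra $V(e-\sigma_l,1)$ equipped with the Jordan frame $(c_{l+1},\ldots,c_r)$, this is exactly the level-$(k-l)$ principal projection of $\theta_0$. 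Applying (i) inside this sub-algebra at index $k-l$ to $\theta_0$ therefore yields
\begin{equation*}
\Delta^{e-\sigma_l}_{k-l}(\theta_0^{-1})=\det_{e-\sigma_l}(\theta_0^{-1})\det_{e-\sigma_k}(\theta^{(k)}).
\end{equation*}
Multiplying the two displayed identities and using $\det_{e-\sigma_l}(\theta_0^{-1})=\det_{e-\sigma_l}(\theta_0)^{-1}$ together with (i) for $V$ gives $\Delta_k(\theta^{-1})=\Delta_l(\theta^{-1})\,\Delta^{e-\sigma_l}_{k-l}(\theta_0^{-1})$ for every $k\geq l$, with the convention $\Delta^{e-\sigma_l}_0\equiv 1$. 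Taking the ratio of this identity at $k+1$ and at $k$ delivers both formulas of part (ii) at once.
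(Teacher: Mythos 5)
The paper does not actually prove this theorem -- it defers to reference [4] (Hassairi--Lajmi 2004) -- so there is no in-text proof to match yours against line by line. Judged on its own terms, your argument is correct. Part (i) follows exactly as you say once the two Schur-complement facts are granted, namely $\det(\theta)=\det_{\sigma_l}\bigl(\theta_1-P(\theta_{12})\theta_0^{-1}\bigr)\det_{e-\sigma_l}(\theta_0)$ and $P(\sigma_l)(\theta^{-1})=\bigl(\theta_1-P(\theta_{12})\theta_0^{-1}\bigr)^{-1}$; these are indeed classical (they come from the Frobenius transformation $\tau_{\sigma_l}(z)=\exp(2\,z\square \sigma_l)$ in Faraut--Kor\'anyi, Ch.~VI), and it is worth noting that the present paper itself invokes the second one without proof in the computation of Theorem 3.2, so you are on the same footing as the authors there. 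Your reduction of (ii) to (i) is the nicest part: since (i) holds for every index $k$, and since for $k\geq l$ the projection onto $V(e-\sigma_k,1)$ factors through the projection onto $V(e-\sigma_l,1)$, applying (i) once in $V$ and once in the subalgebra $V(e-\sigma_l,1)$ (which is again simple with the same Peirce constant $d$ and Jordan frame $(c_{l+1},\dots,c_r)$, and contains $\theta_0\in\Omega_{e-\sigma_l}$ by Proposition 2.1(i)) yields the multiplicative identity $\Delta_k(\theta^{-1})=\Delta_l(\theta^{-1})\,\Delta^{e-\sigma_l}_{k-l}(\theta_0^{-1})$, from which both formulas of (ii) follow by taking ratios; the only endpoint to check is $k+1=r$, where the identity degenerates to (i) itself and still holds. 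The one thing you should tighten if writing this up in full is the derivation of the two Schur-complement identities, which you currently only sketch; everything downstream of them is complete.
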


We now introduce the set $\Xi$ of elements $s=(s_{1},...,s_{r})$ in $\Bbb{R}^{r}$ defined in the following way:\\
For a given real number $u\geq0$, we set
 $$\varepsilon(u)=0 \ \ \ \textrm{if} \ \ \ u=0$$
$$\varepsilon  (u)=1 \ \ \ \textrm{if} \ \ \ u>0$$
Given $u=(u_{1},...,u_{r})\in \mathbb{R}_{+}^{r}$, we define
\begin{equation}\label{S3}
s_{1}=u_{1}\ \ \textrm{and} \ \
s_{i}=u_{i}+\frac{d}{2}(\varepsilon(u_{1})+...+\varepsilon(u_{i-1}))\
\ \textrm{for} \ \ \ 2\leq i \leq r.
\end{equation}
Note that the set $\Xi$ contains
$\prod_{i=1}^{r}](i-1)\frac{d}{2},+\infty[$, and that
$$\Lambda=\{p\in \Bbb{R} \ \ \textrm{such that} \ \ (p,...,p)\in \Xi \}=\left \{\frac{d}{2},...,\frac{d}{2}(r-1)\right \}\cup ](r-1)\frac{d}{2},+\infty[$$
The definition of the Riesz measure is based on the following
theorem due to Gindikin ([2]), for a proof we refer the reader to Faraut and Kor\'{a}nyi (1994). The Laplace transform of a positive measure $\mu$ on $V$
is defined by
$$ L_\mu(\theta)=\displaystyle\int_{V} \exp(\langle\theta, x\rangle)\mu(dx).$$

\begin{theorem}\label{ST5}There exists a positive measure $R_{s}$ on
$V$ with Laplace transform defined on -$\Omega$ by
$L_{R_{s}}(\theta)=\Delta_{s}(-\theta^{-1})$ if and only if $s$ is
in the set $\Xi$.
\end{theorem}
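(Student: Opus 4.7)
The plan is to treat the two implications separately. For the sufficiency direction, I would split $\Xi$ into its interior $\{s:s_i>(i-1)d/2\text{ for every }i\}$ (where every $u_i$ is strictly positive) and the boundary strata, handling the interior by a direct integral computation and the strata by induction on the rank $r$. In the interior, set
$$R_s(dx)=\frac{1}{\Gamma_\Omega(s)}\,\Delta_{s-n/r}(x)\,\mathbf{1}_\Omega(x)\,dx,$$
where $\Gamma_\Omega(s)=(2\pi)^{(n-r)/2}\prod_{i=1}^r\Gamma\!\left(s_i-(i-1)d/2\right)$ is the Gindikin gamma function. To verify the Laplace transform identity, write $-\theta=g\cdot e$ for some $g\in G(\Omega)$, change variables via $g$, and exploit the covariance of $\Delta_s$ under the lower-triangular subgroup of $G(\Omega)$ that stabilizes the Jordan frame. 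The multiple integral then factors as a product of one-dimensional gamma integrals, which converge exactly when each $s_i>(i-1)d/2$, and delivers the announced formula.

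For a boundary element $s\in\Xi$ with some $u_i=0$, I would argue by induction on $r$. Choose an index $l$ with $u_l=0$, set $\sigma=\sigma_{l-1}=c_1+\cdots+c_{l-1}$, and work with the Peirce decomposition $V=V(\sigma,1)\oplus V(\sigma,\tfrac12)\oplus V(\sigma,0)$. Theorem~\ref{CDet}, combined with the two propositions established above, provides a factorization of $\Delta_s(-\theta^{-1})$ into pieces that are themselves Laplace transforms on the Peirce subspaces. This suggests building $R_s$ as the image, under a quadratic Peirce-type embedding
$$(y,z)\in V(\sigma,1)\times V(\sigma,\tfrac12)\longmapsto y+z+\tfrac12 P(z)\,e$$
(or an analogous map dictated by the factorization), of the product of a lower-rank Riesz measure on $\Omega_\sigma$ supplied by the induction hypothesis and a Lebesgue/Gaussian-type measure on $V(\sigma,\tfrac12)$. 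A Fubini argument combined with the change-of-variable formula for $2L(x)_{|V(\sigma,1/2)}$ (whose Jacobian is controlled by part (ii) of the first proposition above) then recovers the desired Laplace identity and closes the induction.

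For the necessity direction, suppose a positive measure $R_s$ with the stated Laplace transform exists. Specializing $\theta=-\sum_i t_ic_i$ with $t_i>0$ immediately forces $s_i\ge 0$, but extracting the precise Wallach-type constraint — that each $s_i$ lies in $\{jd/2:0\le j<i\}\cup((i-1)d/2,\infty)$ — requires considerably more care. One route is to differentiate the Laplace transform in directions tangent to the boundary faces of $\Omega$ and track the order of the pole or zero of $\Delta_s(-\theta^{-1})$ as $-\theta$ approaches a face of a given rank; a complementary route is to iterate the convolution identity $L_{R_s^{*n}}(\theta)=\Delta_{ns}(-\theta^{-1})$ and use positivity of $R_s^{*n}$ to rule out non-admissible parameters by concentration on the rank stratification of $\overline\Omega$.

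The principal obstacle is this necessity step. The sufficiency direction rests on explicit integral formulas and an induction driven by Theorem~\ref{CDet}, but showing that no positive measure exists when some $s_i$ falls strictly between two allowed discrete values in $[0,(i-1)d/2]$ requires a detailed understanding of the stratification of $\overline\Omega$ by rank and of the singular Riesz distributions themselves, which is precisely the program the remainder of the present paper carries out.
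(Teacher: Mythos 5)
First, a point of comparison: the paper does not prove this statement at all. It is Gindikin's theorem, quoted with an explicit pointer to Faraut and Kor\'{a}nyi (1994) for the proof, and the whole purpose of the paper is to exploit it (by constructing the singular $R_{s}$ explicitly), not to reprove it. So your proposal has to be measured against the standard proof in the cited reference rather than against anything in this paper.

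Your sufficiency sketch for the interior stratum (the density $\Delta_{s-n/r}(x)\mathbf{1}_{\Omega}(x)\,dx/\Gamma_{\Omega}(s)$, reduction to one-dimensional gamma integrals via the triangular subgroup) is the standard argument and is exactly what the paper recalls at the end of Section 2. Your sketch for the boundary strata anticipates, in spirit, the constructions of Section 3 of the paper (Theorems 3.2, 3.4 and 3.6): a Gaussian integration over $V(\sigma,\tfrac12)$ glued to a lower-rank Riesz measure via a quadratic Peirce embedding. But as written it is only a plan: you say the factorization ``suggests'' the construction and that a Fubini argument ``then recovers'' the identity, without carrying out the Gaussian integral, without identifying the correct embedding (the paper's map is $(x_{1},v)\mapsto x_{1}+2v\sqrt{x_{1}}+(e-\sigma_{l})v^{2}$, not your guessed $y+z+\tfrac12 P(z)e$), and without checking the normalizing constants. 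The decisive gap, however, is the necessity direction, which you yourself flag as unresolved. Neither of your two proposed routes is executed, and the second one cannot work as stated: from $L_{R_{s}^{*n}}(\theta)=\Delta_{ns}(-\theta^{-1})$ you only learn that positivity of $R_{s}$ propagates to $R_{ns}$, which yields no contradiction for a non-admissible $s$ unless you already know that $ns\notin\Xi$ forces non-positivity --- precisely the statement under proof. The actual argument (Faraut--Kor\'{a}nyi, Thm.\ VII.3.1) analytically continues $s\mapsto R_{s}$ as a holomorphic family of tempered distributions and shows positivity fails off the Gindikin--Wallach set by testing against suitable functions near the boundary faces. Since half of the ``if and only if'' is missing, the proposal is not a proof of the theorem.
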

Hassairi and Lajmi ([3]) have  called the measure $R_{s}$ Riesz measure
and they have used it to introduce a class of probability
distributions which is an important extension of the famous Wishart
ones. When $s=(s_{1},s_{2},.....,s_{r})$ is in
$\prod_{i=1}^{r}](i-1)\frac{d}{2},+\infty[$, the measure $R_{s}$ has
an explicit expression. In fact, if for $s$ such that for all $i$,
$s_{i}>(i-1)\frac{d}{2}$, we consider the measure
\[
R_{s}=\frac{1}{\Gamma _{\Omega }(s)}\Delta _{s\text{-}\frac{n}{r}}(x)\mathbf{%
1}_{\Omega }(x)dx,
\]
where $\Gamma _{\Omega }(s)=(2\pi )^{\frac{n-r}{2}}{
{\prod _{j=1}^r}}\Gamma (s_{j}-(j-1)\frac{d}{2})$, then it is proved in
Faraut Kor\'{a}nyi (1994), Theo. VII.1.2, that the Laplace
transform of $R_{s}$ is equal to $\Delta _{s}(-\theta ^{-1})\;$for
$\theta \in -\Omega ,$ that is for all $\theta\in -\Omega$,
$$\displaystyle\frac{1}{\Gamma _{\Omega }(s)}\int \exp(\langle\theta, x\rangle)\Delta _{s\text{-}\frac{n}{r}}(x)\mathbf{%
1}_{\Omega }(x)(dx)=\Delta_{s}(-\theta^{-1}).$$

\section{Description of the Riesz measures}

In this section, we give a complete description of the Riesz mesure
$R_s$ inclosing the ones corresponding to $s$ in
$\Xi\backslash\prod_{i=1}^{r}](i-1)\frac{d}{2},+\infty[$ which are
concentrated on the boundary $\partial\Omega$ of the symmetric cone
$\Omega$. In order to do so, we need to recall some facts on the
boundary structure of the cone $\Omega$. More precisely, we have the
following useful decomposition of the closed cone
$\overline{\Omega}$ into orbits under the action of the group $G$,
connected component of the identity in $ G(\Omega)$, which appears
in Lasalle (1987) and in Faraut and Kor\'{a}nyi (1994). Recall that
for the fixed Jordan frame $(c_{i})_{1\leq i \leq r}$ and $1\leq l
\leq r$, $\sigma_{l}=\displaystyle\sum_{i=1}^lc_i.$

\begin{proposition}

i) Let $x$ be in $\overline{\Omega}$. Then $x$ is of rank $l$ if and
only if $x\in G{\sigma_{l}}$

ii) We have that
$\overline{\Omega}=\displaystyle\bigcup_{l=1}^rG{\sigma_{l}}$.
\\More precisely, $\Omega=G{\sigma_{r}}=G{e}$ and
$\partial\Omega=\displaystyle\bigcup_{l=1}^{r-1}G{\sigma_{l}}$

iii) Denote for  $1\leq l \leq r-1$
\begin{center}
    $J_{l}=\{x\in G{\sigma_{l}}; \ \Delta_{l}(x)\neq 0\}$.
\end{center}
then $J_l $ is an open subset dense in $G{\sigma_{l}}$.

iv)Suppose that $x=x_{1}+x_{12}+x_{0}$ is the Peirce decomposition
of $x$ with respect to $\sigma_{l}$,  then the map
\begin{center}
    $\Omega_{\sigma_{l}} \times V(\sigma_{l},\frac{1}{2})\rightarrow J_{l}$ \
    ; \  $(x_{1},x_{12})\longmapsto
        x_{1}+x_{12}+2(e-\sigma_{l})[x_{12}(x_{1}^{-1}x_{12})]$
\end{center}
is a bijection.
\end{proposition}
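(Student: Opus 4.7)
The plan is to handle (i)--(iii) quickly using classical structure theory, and to devote the main effort to (iv), where I realise $\varphi$ via the quadratic representation and invoke a Schur-complement-type identity.

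For (i), I would use the spectral theorem to write any $x\in\overline\Omega$ as $\sum\lambda_i c_i'$ with $\lambda_i\geq 0$, so that the rank of $x$ equals the number of positive $\lambda_i$. Transitivity of $G$ on Jordan frames together with the diagonal one-parameter subgroups rescaling the positive eigenvalues to $1$ sends every rank-$l$ element onto $\sigma_l$, while the reverse inclusion holds because $G\subset GL(V)$ preserves rank. Statement (ii) is then immediate. For (iii), $\Delta_l$ is a polynomial on $V$, so $J_l$ is open in $G\sigma_l$; it is nonempty since $\Delta_l(\sigma_l)=1$, and dense because a nonzero polynomial cannot vanish on a nonempty open subset of the connected smooth manifold $G\sigma_l$.

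The heart is (iv). I would first establish that $\varphi$ is realised through the quadratic representation as
\[
\varphi(x_1,x_{12})=P\bigl(x_1^{1/2}+a\bigr)\,\sigma_l,\qquad a:=2L(x_1^{-1/2})\,x_{12}\in V(\sigma_l,\tfrac12),
\]
where $x_1^{1/2}\in\Omega_{\sigma_l}$ is the square root in $V(\sigma_l,1)$ and $a$ is well defined thanks to Proposition 2.1(iii). Expanding $P(y)=2L(y)^2-L(y^2)$ with $y=x_1^{1/2}+a$ and using the Peirce multiplication rules yields $x_1+x_{12}+\pi_0(a^2)$, where $\pi_0$ denotes the Peirce projection onto $V(\sigma_l,0)$. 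The identification $\pi_0(a^2)=2(e-\sigma_l)[x_{12}(x_1^{-1}x_{12})]$ is then obtained by polarising the elementary relation $P(a)\sigma_l=\pi_0(a^2)$ (valid for every $a\in V(\sigma_l,\tfrac12)$) together with the identity $x_1^{-1}\cdot x_{12}=x_1^{-1/2}\cdot a$, which is a direct consequence of Proposition 2.1(iv). Since $P(g)\overline\Omega\subseteq\overline\Omega$ for every $g\in V$, this shows $\varphi(x_1,x_{12})\in\overline\Omega$, and the computation $\Delta_l(\varphi(x_1,x_{12}))=\det_{\sigma_l}(x_1)\neq 0$ forces the rank to be exactly $l$, so $\varphi(x_1,x_{12})\in J_l$. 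Injectivity of $\varphi$ is then immediate from the uniqueness of the Peirce decomposition with respect to $\sigma_l$.

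The main obstacle is surjectivity. Given $y=y_1+y_{12}+y_0\in J_l$, Proposition 2.1(i) gives $y_1=P(\sigma_l)y\in\overline{\Omega_{\sigma_l}}$, and $\det_{\sigma_l}(y_1)=\Delta_l(y)\neq 0$ upgrades this to $y_1\in\Omega_{\sigma_l}$. Setting $z:=\varphi(y_1,y_{12})\in J_l$, both $y$ and $z$ lie in $\overline\Omega$, share rank $l$, and agree on their Peirce components in $V(\sigma_l,1)$ and $V(\sigma_l,\tfrac12)$, so the remaining task is to prove $y_0=z_0$. This is the Jordan-algebra analogue of the matrix Schur complement: any $w=w_1+w_{12}+w_0$ with $w_1\in\Omega_{\sigma_l}$ belongs to $\overline\Omega$ if and only if its Schur complement $w_0-2(e-\sigma_l)[w_{12}(w_1^{-1}w_{12})]$ lies in $\overline{\Omega_{e-\sigma_l}}$, with the rank of $w$ equal to $l$ plus that of the Schur complement. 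Applied to $y$, the rank-$l$ hypothesis forces the Schur complement to vanish, giving $y_0=z_0$. I would prove the Schur-complement statement itself by conjugating by $P(x_1^{1/2}+a)$ to reduce to the normalised case $w_1=\sigma_l$, where the positivity of the associated quadratic form on $V(\sigma_l,\tfrac12)$ is controlled by Proposition 2.2(iii); this reduction, together with careful tracking of the Peirce components under the quadratic representation, is where the technical difficulty concentrates.
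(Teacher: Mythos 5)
The paper never proves this proposition: it is quoted from Lassalle (1987) and Faraut--Kor\'anyi (1994) (essentially Prop.\ IV.3.1--IV.3.2 together with the Frobenius parametrisation of Chapter VI), so there is no in-paper argument to compare yours against. Your outline is in fact the classical one from those references: spectral decomposition plus transitivity of $G$ for (i)--(ii), analyticity of $\Delta_l$ on the connected orbit for (iii), and for (iv) the realisation $\varphi(x_1,x_{12})=P(x_1^{1/2}+a)\sigma_l$ with a Schur-complement characterisation of rank for surjectivity. The computation $P(x_1^{1/2}+a)\sigma_l=x_1+x_{12}+\pi_0(a^2)$ (with $\pi_0$ the Peirce projection onto $V(\sigma_l,0)$) and the identity $\pi_0(a^2)=2(e-\sigma_l)[x_{12}(x_1^{-1}x_{12})]$ both check out; in the symmetric-matrix model they reduce to the familiar block $B^{T}A^{-1}B$.

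Two steps need repair. First, your proof of the Schur-complement lemma proposes to "conjugate by $P(x_1^{1/2}+a)$ to reduce to $w_1=\sigma_l$", but $x_1^{1/2}+a$ has zero $V(\sigma_l,0)$-component and is in general \emph{not} invertible in $V$ (already for $a=0$ and $l<r$ its determinant vanishes), so $P(x_1^{1/2}+a)$ is not an automorphism of $\Omega$ and cannot serve as a change of variables. The standard fix is to normalise instead with elements that genuinely lie in $G$: the unipotent Frobenius transformation $\tau_{\sigma_l}(z)=\exp(2\,z\,\square\,\sigma_l)$ for a suitable $z\in V(\sigma_l,\tfrac12)$, composed with $P\bigl(w_1^{-1/2}+(e-\sigma_l)\bigr)$. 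Second, the assertion that $\Delta_l(\varphi(x_1,x_{12}))=\det_{\sigma_l}(x_1)\neq0$ "forces the rank to be exactly $l$" only gives rank at least $l$; for the upper bound you must argue that $P(y)$ does not increase rank, e.g.\ by approximating $y=x_1^{1/2}+a$ by invertible elements so that $P(y)\sigma_l$ lies in the closed set $\overline{G\sigma_l}=\bigcup_{j\leq l}G\sigma_j$. With these two repairs your argument is complete and coincides with the proof in the cited literature.
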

As a corollary of the last point, we have that an element $x$ of
$J_{l}$ can be written in a unique way as
$x=x_{1}+x_{12}+(e-\sigma_{l})v^{2}$, where
$v=\frac{1}{2}\sqrt{x_{1}^{-1}}x_{12}$.\\

We now give the description of the Riesz measures $R_{s}$ when $s$
has a particular form, we then give the general case.
\begin{theorem}
Let $l$ be in $\{1,...,r\}$,
$\sigma_{l}=\displaystyle\sum_{i=1}^lc_i$, and $u=(u_{1},...,u_{l})$
in $\mathbb{R}^{l}$ such that $u_{i}>(i-1)\frac{d}{2}$, for $1\leq i
\leq l$. Consider the measure
$$\gamma_{l}(dx_{1},dv)=\frac{\Delta_{u}^{\sigma_{l}}(x_{1})(\det_{\sigma_{l}}(x_{1}))^{-1-(l-1)\frac{d}{2}}}{(2\pi)^{l(r-l)\frac{d}{2}}\Gamma_{\Omega_{\sigma_{l}}}(u)}{\mathbf{1}}_{\Omega_{\sigma_{l}} \times V(\sigma_{l},\frac{1}{2})}(x_{1},v)dx_{1}dv$$
and the map
\begin{center}
    $\alpha : \Omega_{\sigma_{l}} \times V(\sigma_{l},\frac{1}{2})\rightarrow V$ \
    ; \  $(x_{1},v) \longmapsto
    x_{1}+2v\sqrt{x_{1}}+(e-\sigma_{l})v^{2}$.
\end{center}
Then the Laplace transform of the image $\mu_{l}=\alpha \gamma_{l}$
of $\gamma_{l}$ by $\alpha$ is defined on $-\Omega$ and is given by
$$L_{\mu_{l}}(\theta)=\Delta_{s}(-\theta^{-1}),$$
where
$s=(u_{1},...,u_{l},\frac{dl}{2},...,\frac{dl}{2})\in\mathbb{R}^{r}$.
\end{theorem}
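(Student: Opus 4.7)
The plan is to compute the Laplace transform of $\mu_l$ by transferring the integral back to $\Omega_{\sigma_l}\times V(\sigma_l,\frac{1}{2})$ through $\alpha$, performing a Gaussian integration in $v$ for fixed $x_1$, recognising the remaining integral on $\Omega_{\sigma_l}$ as the Laplace transform of an absolutely continuous Riesz measure on the sub-symmetric cone, and finally matching the result with $\Delta_{s}(-\theta^{-1})$ via Theorem 2.1 and a Schur-complement identity.

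First I would write the Peirce decomposition $\theta=\theta_{1}+\theta_{12}+\theta_{0}$ of $\theta$ with respect to $\sigma_{l}$. Orthogonality of the three Peirce pieces, together with the fact that $(e-\sigma_{l})v^{2}$ is exactly the $V(\sigma_{l},0)$-projection of $v^{2}$ (since $V(\sigma_{l},\frac{1}{2})^{2}\subseteq V(\sigma_{l},1)\oplus V(\sigma_{l},0)$), splits the exponent $\langle\theta,\alpha(x_{1},v)\rangle$ as $\langle\theta_{1},x_{1}\rangle+2\langle L(\sqrt{x_{1}})\theta_{12},v\rangle+\langle L(\theta_{0})v,v\rangle$, a quadratic form in $v$. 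Because $-\theta\in\Omega$ self-dually forces $-\theta_{0}\in\Omega_{e-\sigma_{l}}$, Proposition 2.2 iii) applied with $u_{1}=\sigma_{l}$ shows that $L(-\theta_{0})$ is positive definite on $V(\sigma_{l},\frac{1}{2})$, so the Gaussian integration in $v$ converges, and Proposition 2.1 ii), used inside $V(e-\sigma_{l},1)$ with the idempotent $e-\sigma_{l}$ of rank $r-l$, evaluates its determinant as a power of $\det_{e-\sigma_{l}}(-\theta_{0})$. The cross term produced by completing the square is simplified in three moves: Proposition 2.2 ii) allows me to commute $L(\sqrt{x_{1}})$ with $L((-\theta_{0})^{-1})$ on $V(\sigma_{l},\frac{1}{2})$, Proposition 2.1 iv) replaces $L(\sqrt{x_{1}})^{2}$ by $\frac{1}{2}L(x_{1})$, and Proposition 2.2 i) converts the resulting quadratic form on $V(\sigma_{l},\frac{1}{2})$ into the pairing $\langle x_{1},P(\theta_{12})(-\theta_{0})^{-1}\rangle$ on $V(\sigma_{l},1)$.

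After the factors of $\pi$ and $2$ cancel against the constant $(2\pi)^{l(r-l)d/2}$ in the density of $\gamma_{l}$, this produces $L_{\mu_{l}}(\theta)=\det_{e-\sigma_{l}}(-\theta_{0})^{-dl/2}\int_{\Omega_{\sigma_{l}}}e^{\langle\tilde{\theta}_{1},x_{1}\rangle}\Delta_{u}^{\sigma_{l}}(x_{1})\det_{\sigma_{l}}(x_{1})^{-1-(l-1)d/2}\Gamma_{\Omega_{\sigma_{l}}}(u)^{-1}dx_{1}$, where $\tilde{\theta}_{1}:=\theta_{1}+P(\theta_{12})(-\theta_{0})^{-1}$ is the Schur complement and lies in $-\Omega_{\sigma_{l}}$ (positivity of the Schur complement of a positive element). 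Since $1+(l-1)d/2$ is exactly $n'/l$ with $n'=\dim V(\sigma_{l},1)=l+l(l-1)d/2$, the integrand is precisely the Riesz density on the sub-cone $\Omega_{\sigma_{l}}$ with parameter $u$, and the Faraut-Kor\'{a}nyi formula recalled in the preliminaries, applied inside $V(\sigma_{l},1)$, is valid because $u_{i}>(i-1)d/2$ for $1\le i\le l$; it identifies the integral as $\Delta_{u}^{\sigma_{l}}(-\tilde{\theta}_{1}^{-1})$.

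It remains to verify the equality $\det_{e-\sigma_{l}}(-\theta_{0})^{-dl/2}\Delta_{u}^{\sigma_{l}}(-\tilde{\theta}_{1}^{-1})=\Delta_{s}(-\theta^{-1})$. Using the telescoping form of the generalized power with the convention $u_{l+1}:=0$, one has $\Delta_{s}(-\theta^{-1})=[\Delta_{r}(-\theta^{-1})/\Delta_{l}(-\theta^{-1})]^{dl/2}\prod_{k=1}^{l}\Delta_{k}(-\theta^{-1})^{u_{k}-u_{k+1}}$, and Theorem 2.1 i) reads off the bracketed factor as $\det_{e-\sigma_{l}}(-\theta_{0})^{-dl/2}$. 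The remaining matching reduces to the identity $\Delta_{k}(-\theta^{-1})=\Delta_{k}^{\sigma_{l}}(-\tilde{\theta}_{1}^{-1})$ for $1\le k\le l$, i.e., the Jordan-algebra block-inverse formula asserting that the $V(\sigma_{l},1)$-projection of $\psi^{-1}$ (with $\psi=-\theta$) is the inverse in $V(\sigma_{l},1)$ of the Schur complement $\bar{\psi}_{1}=\psi_{1}-P(\psi_{12})\psi_{0}^{-1}$. I expect this last identification to be the main obstacle. I would prove it by induction on $l$: the case $k=l$ follows from Theorem 2.1 i) together with the block-determinant identity $\det(-\theta)=\det_{\sigma_{l}}(-\tilde{\theta}_{1})\det_{e-\sigma_{l}}(-\theta_{0})$ (itself an easy consequence of Theorem 2.1 i) applied to $(-\theta)$), and smaller values of $k$ are handled by iterating the Schur complement inside $V(\sigma_{l},1)$.
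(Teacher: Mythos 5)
Your proposal is correct and follows essentially the same route as the paper: Peirce decomposition of $\theta$ with respect to $\sigma_l$, Gaussian integration in $v$ using Propositions 2.1 and 2.2 to evaluate the determinant and reduce the completed-square term to $\langle x_{1},P(\theta_{12})(-\theta_{0}^{-1})\rangle$, identification of the remaining $x_1$-integral with the absolutely continuous Riesz integral on $\Omega_{\sigma_l}$, and the final telescoping via the theorem on principal minors. The only cosmetic difference is that the paper simply invokes the block-inverse identity $P(\sigma_l)(\theta^{-1})=(\theta_{1}-P(\theta_{12})\theta_{0}^{-1})^{-1}$ as known, where you propose to reprove it.
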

\begin{proof}
Let $\theta$ be in $-\Omega$ and let
$\theta=\theta_{1}+\theta_{12}+\theta_{0}$ be its Peirce
decomposition  with respect to $\sigma_{l}$. Then according to
Proposition 2.1, i), we have that $\theta_{1}=P(\sigma_{l})(\theta)$
is in $-\Omega_{\sigma_{l}}$ and
$\theta_{0}=P(e-\sigma_{l})(\theta)$ is in $-\Omega_{e-\sigma_{l}}$.
Let us calculate the Laplace transform of $\mu_{l}$ in $\theta$.
\begin{eqnarray*}
 L_{\mu_{l}}(\theta)&=& \int_{\Omega_{\sigma_{l}}\times V(\sigma_{l},\frac{1}{2})} \exp(\langle \theta,\alpha(x_{1},v)\rangle \gamma_{l}(dx_{1},dv)\\
 &=& \int_{\Omega_{\sigma_{l}}\times V(\sigma_{l},\frac{1}{2})}\exp(\langle \theta_{1},x_{1}\rangle +\langle \theta_{12},2v\sqrt{x_{1}} \rangle + \langle \theta_{0},v^{2}\rangle )\\& & \frac{\Delta_{u}^{\sigma_{l}}(x_{1})\det_{\sigma_{l}}(x_{1})^{-1-(l-1)\frac{d}{2}}}{(2\pi)^{l(r-l)\frac{d}{2}}\Gamma_{\Omega_{\sigma_{l}}}(u)}
 dx_{1}dv.
 \end{eqnarray*}
 This may be written as
\begin{equation}\label{S3}
L_{\mu_{l}}(\theta) = \int_{\Omega_{\sigma_{l}}}I(x_{1})\exp(\langle
\theta_{1},x_{1}\rangle
)\Delta_{u}^{\sigma_{l}}(x_{1})\textrm{det}_{\sigma_{l}}(x_{1})^{-1-(l-1)\frac{d}{2}}\frac{dx_{1}}{\Gamma_{\Omega_{\sigma_{l}}}(u)},
\end{equation}
 where $$I(x_{1})=\frac{1}{(2\pi)^{l(r-l)\frac{d}{2}}}\int_{ V(\sigma_{l},\frac{1}{2})}\exp(\langle \theta_{12},2v\sqrt{x_{1}} \rangle +\langle \theta_{0},v^{2}\rangle
 )dv.$$
 According to Prop 2.2, iii) and Prop 2.1, iii)
 , we have that $2L(-\theta_{0})_{|
 V(\sigma_{l},\frac{1}{2})}=L(4\sigma_{l})L(-\theta_{0})_{|V(\sigma_{l},\frac{1}{2})}$
 is an automorphism of $V(\sigma_{l},\frac{1}{2})$ whose the inverse is equal to $2L(-\theta_{0}^{-1})_{|
 V(\sigma_{l},\frac{1}{2})}$. Thus, one can write
 $$I(x_{1})=\frac{1}{(2\pi)^{l(r-l)\frac{d}{2}}}\int_{ V(\sigma_{l},\frac{1}{2})}\exp(\langle 2\sqrt{x_{1}}\theta_{12},v \rangle -\frac{1}{2}\langle 2L(-\theta_{0})v,v\rangle
 )dv.$$
 Using Lemma VII.2.5 Faraut and Kor\'{a}nyi (1994), then again Proposition 2.1, we get
 \begin{eqnarray*}
 I(x_{1})&=& \left(\det 2L(-\theta_{0}^{-1})_{|
 V(\sigma_{l},\frac{1}{2})}
 \right)^{\frac{1}{2}}\exp(\frac{1}{2}\langle 2\sqrt{x_{1}}\theta_{12},2L(-\theta_{0}^{-1})2\sqrt{x_{1}}\theta_{12}\rangle)
 \\&=& \det_{{e-\sigma_{l}}}(-\theta_{0}^{-1})^{l\frac{d}{2}}\exp(\frac{1}{2}\langle 2\sqrt{x_{1}}\theta_{12},4L(-\theta_{0}^{-1})L(\sqrt{x_{1}})\theta_{12}\rangle).
 \end{eqnarray*}
As $L(\sqrt{x_{1}})$ is symmetric, we can write
$$I(x_{1})= \det_{e-\sigma_{l}}(-\theta_{0}^{-1})^{l\frac{d}{2}}\exp(\langle 4\theta_{12},L(\sqrt{x_{1}})^{2}L(-\theta_{0}^{-1})\theta_{12}\rangle ).$$
Proposition 2.1 implies that
$$I(x_{1})= \det_{e-\sigma_{l}}(-\theta_{0}^{-1})^{l\frac{d}{2}}\exp(2\langle \theta_{12},L(x_{1})L(-\theta_{0}^{-1})\theta_{12}\rangle ).$$
Finally, from Proposition 2.2, we deduce that
$$I(x_{1})=\det_{e-\sigma_{l}}(-\theta_{0}^{-1})^{l\frac{d}{2}}\exp(\langle x_{1},P(\theta_{12})(-\theta_{0}^{-1})\rangle ).$$
Now inserting this in $(3)$, we obtain
\begin{eqnarray*}
 L_{\mu_{l}}(\theta)&=&\frac{\det _{e-\sigma_{l}}(-\theta_{0})^{-l\frac{d}{2}}}{\Gamma_{\Omega_{\sigma_{l}}}(u)}
\int_{\Omega_{\sigma_{l}}}\exp(\langle
x_{1},\theta_{1}-P(\theta_{12})(\theta_{0}^{-1})\rangle
)\Delta_{u}^{\sigma_{l}}(x_{1})\\& & \det
_{\sigma_{l}}(x_{1})^{-1-(l-1)\frac{d}{2}}dx_{1}
 \\&=& \det _{e-\sigma_{l}}(-\theta_{0})^{-l\frac{d}{2}}\Delta_{u}^{\sigma_{l}}\left(-(\theta_{1}-P(\theta_{12})(\theta_{0}^{-1}))^{-1}\right).
 \end{eqnarray*}

Since $(\theta^{-1})_{l}=P(\sigma_{l})(\theta^{-1})= (\theta_1 -
P(\theta_{12})(\theta_{0}^{-1}))^{-1}$ and according to Theorem 2.3,
we can write
\begin{eqnarray*}
 L_{\mu_{l}}(\theta)&=& \det _{e-\sigma_{l}}(-\theta_{0})^{-l\frac{d}{2}}\Delta_{u}^{\sigma_{l}}(-(\theta^{-1})_{l})
 \\&=&
 \left(\frac{\Delta_{l}(-\theta^{-1})}{\det(-\theta^{-1})}\right)^{-l\frac{d}{2}}\Delta_{u}^{\sigma_{l}}(-(\theta^{-1})_{l}),
\end{eqnarray*}
Therefore,
\begin{eqnarray*}
L_{\mu_{l}}(\theta)&=&
\Delta_{1}(-\theta^{-1})^{u_{1}-u_{2}}...\Delta_{l-1}(-\theta^{-1})^{u_{l-1}-u_{l}}\Delta_{l}(-\theta^{-1})^{u_{l}-l\frac{d}{2}}\det(-\theta^{-1})^{l\frac{d}{2}}
\\&=&\Delta_{s}(-\theta^{-1}),
\end{eqnarray*}
where $s=(u_{1},...,u_{l},l\frac{d}{2},...,l\frac{d}{2})$ in
$\mathbb{R}^{r}$
\end{proof}
\begin{corollary} For $ 1 \leq l \leq r-1$, the measure $ \mu_{l}$ is concentrated  on the boundary $ \partial\Omega$ of the symmetric cone $\Omega$.
\end{corollary}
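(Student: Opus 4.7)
The strategy is essentially a bookkeeping argument based on Proposition~3.1. Since $\mu_l=\alpha\gamma_l$ is the image of $\gamma_l$ under $\alpha$, the support of $\mu_l$ is contained in the image of $\alpha$, so it suffices to show that $\alpha(\Omega_{\sigma_l}\times V(\sigma_l,\tfrac{1}{2}))\subseteq\partial\Omega$ when $l\leq r-1$.

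First I would identify the image of $\alpha$ with the stratum $J_l$ described in Proposition~3.1 iii)--iv). The parametrization used there is $(x_1,x_{12})\mapsto x_1+x_{12}+2(e-\sigma_l)[x_{12}(x_1^{-1}x_{12})]$ with the change of variables $v=\tfrac{1}{2}\sqrt{x_1^{-1}}\,x_{12}$. Setting $x_{12}=2v\sqrt{x_1}$ and unwinding the Jordan-product identities in $V(\sigma_l,1)$ (using $\sqrt{x_1}\sqrt{x_1^{-1}}=\sigma_l$ and that squaring cancels the factor $2$), the expression $2(e-\sigma_l)[x_{12}(x_1^{-1}x_{12})]$ becomes exactly $(e-\sigma_l)v^2$. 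Hence the map $\alpha$ is, up to this reparametrization, the same bijection onto $J_l$, so
$$\alpha\bigl(\Omega_{\sigma_l}\times V(\sigma_l,\tfrac{1}{2})\bigr)=J_l.$$

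Next I would invoke Proposition~3.1 iii), which places $J_l$ inside the orbit $G\sigma_l$, and Proposition~3.1 ii), which gives $\partial\Omega=\bigcup_{l=1}^{r-1}G\sigma_l$. For $1\leq l\leq r-1$ this yields $J_l\subseteq G\sigma_l\subseteq\partial\Omega$, and consequently $\mathrm{supp}(\mu_l)\subseteq\partial\Omega$, as required.

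The main obstacle, which is really only a notational check, is verifying that the map $\alpha$ given in Theorem~3.2 genuinely coincides with the bijection in Proposition~3.1 iv). Once that identification is made, no further computation is needed: the conclusion follows immediately from the orbit decomposition of $\overline{\Omega}$ recalled in Proposition~3.1.
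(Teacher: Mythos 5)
Your proposal is correct and follows essentially the same route as the paper: the paper's proof simply observes that $\mu_l$ is concentrated on $J_l\subseteq G\sigma_l$, which by Proposition~3.1~ii) lies in $\partial\Omega$ for $l\leq r-1$. The identification of the image of $\alpha$ with $J_l$ via $v=\tfrac{1}{2}\sqrt{x_1^{-1}}\,x_{12}$, which you verify, is exactly the reparametrization the paper records in the remark immediately following Proposition~3.1.
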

\begin{proof}
 In fact, $\mu_{l}$ is concentrated
on on the set $ J_{l}=\{x\in G{\sigma_{l}}; \ \Delta_{l}(x)\neq 0\}$
which is dense in $G_{\sigma_{l}}$ (prop 3.1)).
\end{proof}
\begin{theorem}
Let $l$ be in $\{1,...,r-1\}$, and suppose that for
$u=(u_{1},...,u_{r-l})\in\mathbb{R}_{+}^{r-l}$, there exists a
measure $\mu_{u}$ on $V(e-\sigma_{l},1)$ such that the Laplace
transform is defined on $-\Omega_{e-\sigma_{l}}$ and is equal to
$\Delta_{u}^{e-\sigma_{l}}(-\theta_{0}^{-1})$. Then the Laplace
transform of the measure $\mu$ image of $\mu_{u}$ by the injection
of $V(e-\sigma_{l},1)$ into $V$ is defined on $-\Omega$ and
$L_{\mu}(\theta)=\Delta_{s}(-\theta^{-1})$, where
$s=(0,...0,u_{1},...,u_{r-l})\in\mathbb{R}_+ ^{r}$.
\end{theorem}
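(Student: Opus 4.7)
The plan is to compute $L_\mu(\theta)$ by a direct change of variables, and then match it to $\Delta_s(-\theta^{-1})$ using Theorem 2.3 together with the telescoping structure of the generalized power.

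First I would reduce the Laplace transform of $\mu$ to that of $\mu_u$. Since $\mu$ is the image of $\mu_u$ under the injection $V(e-\sigma_l,1)\hookrightarrow V$, and since for $x_0\in V(e-\sigma_l,1)$ the scalar product $\langle\theta,x_0\rangle$ equals $\langle P(e-\sigma_l)(\theta),x_0\rangle=\langle\theta_0,x_0\rangle$ (the Peirce components $\theta_1$ and $\theta_{12}$ are orthogonal to $V(e-\sigma_l,1)$), one gets
\[
L_\mu(\theta)=\int_{V(e-\sigma_l,1)}\exp(\langle\theta_0,x_0\rangle)\,\mu_u(dx_0)=\Delta_u^{e-\sigma_l}(-\theta_0^{-1}),
\]
where by Proposition 2.1 (i) the projection $\theta_0$ lies in $-\Omega_{e-\sigma_l}$, so the Laplace transform of $\mu_u$ is legitimately evaluated at $-\theta_0$.

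Next I would translate this into the claimed expression $\Delta_s(-\theta^{-1})$. With $s=(0,\dots,0,u_1,\dots,u_{r-l})$, the differences of successive coordinates of $s$ vanish for indices $1\le k\le l-1$, equal $-u_1$ at index $k=l$, and equal $u_{k-l}-u_{k-l+1}$ for $l+1\le k\le r-1$, while $s_r=u_{r-l}$. Hence
\[
\Delta_s(-\theta^{-1})=\Delta_l(-\theta^{-1})^{-u_1}\prod_{k=l+1}^{r-1}\Delta_k(-\theta^{-1})^{u_{k-l}-u_{k-l+1}}\Delta_r(-\theta^{-1})^{u_{r-l}}.
\]
By Theorem 2.3, for $l+1\le k\le r$ one has $\Delta_k(-\theta^{-1})=\Delta_l(-\theta^{-1})\,\Delta_{k-l}^{e-\sigma_l}(-\theta_0^{-1})$. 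Substituting, the powers of $\Delta_l(-\theta^{-1})$ add up to the telescoping total $-u_1+\sum_{j=1}^{r-l-1}(u_j-u_{j+1})+u_{r-l}=0$, so these factors cancel entirely, and one is left with
\[
\prod_{j=1}^{r-l-1}\Delta_j^{e-\sigma_l}(-\theta_0^{-1})^{u_j-u_{j+1}}\cdot\Delta_{r-l}^{e-\sigma_l}(-\theta_0^{-1})^{u_{r-l}}=\Delta_u^{e-\sigma_l}(-\theta_0^{-1}).
\]

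The computation is therefore almost formal once Theorem 2.3 is in hand; there is no genuine analytic difficulty. The only delicate point is bookkeeping: making sure the two halves of Theorem 2.3 (the boundary identity at $k=l$ and the ratio identity for $k\ge l+1$) combine into a single uniform formula on the full range $l+1\le k\le r$, and that the sign cancellation for powers of $\Delta_l(-\theta^{-1})$ is realized as the correct telescoping sum. No extra regularity or invariance argument is required.
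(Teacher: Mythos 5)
Your proposal is correct and follows essentially the same route as the paper: both reduce $L_\mu(\theta)$ to $\Delta_u^{e-\sigma_l}(-\theta_0^{-1})$ via the orthogonality of the Peirce components, and both convert between $\Delta_u^{e-\sigma_l}(-\theta_0^{-1})$ and $\Delta_s(-\theta^{-1})$ using the ratio identities of Theorem 2.3. The only cosmetic difference is direction: the paper expands $\Delta_u^{e-\sigma_l}(-\theta_0^{-1})$ as a telescoping product of ratios and pushes forward, while you expand $\Delta_s(-\theta^{-1})$ and pull back using the telescoped form $\Delta_k(-\theta^{-1})=\Delta_l(-\theta^{-1})\,\Delta_{k-l}^{e-\sigma_l}(-\theta_0^{-1})$, which is an immediate consequence of the same lemma.
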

\begin{proof}
Let $x=x_{1}+x_{12}+x_{0}$ and
$\theta=\theta_{1}+\theta_{12}+\theta_{0}$ be respectively the
Peirce decomposition with respect to $\sigma_{l}$ of an element $x$
of $V$ and an element $\theta$ of $-\Omega$. Then
\begin{eqnarray*}
L_{\mu}(\theta)&=&\int_{V(e-\sigma_{l},1)}\exp(\langle
\theta,x_{0}\rangle)
\mu_{u}(dx_{0})\\&=&\int_{V(e-\sigma_{l},1)}\exp(\langle
\theta_{0},x_{0}\rangle) \mu_{u}(dx_{0})
\\&=&\Delta_{u}^{e-\sigma_{l}}(-\theta^{-1}_{0})\\&=&\Delta_{1}^{e-\sigma_{l}}(-\theta^{-1}_{0})^{u_{1}}\left(\frac{\Delta_{2}^{e-\sigma_{l}}(-\theta^{-1}_{0})}{\Delta_{1}^{e-\sigma_{l}}(-\theta^{-1}_{0})}\right)^{u_{2}}...\left(\frac{\Delta_{r-l}^{e-\sigma_{l}}(-\theta^{-1}_{0})}{\Delta_{r-l-1}^{e-\sigma_{l}}(-\theta^{-1}_{0})}\right)^{u_{r-l}}
\end{eqnarray*}
This according to Theorem 2.3 leads to
\begin{eqnarray*}
L_{\mu}(\theta)&=&\left(\frac{\Delta_{l+1}(-\theta^{-1})}{\Delta_{l}(-\theta^{-1})}\right)^{u_{1}}\left(\frac{\Delta_{l+2}(-\theta^{-1})}{\Delta_{l+1}(-\theta^{-1})}\right)^{u_{2}}...\left(\frac{\Delta_{r}(-\theta^{-1})}{\Delta_{r-1}(-\theta^{-1})}\right)^{u_{r-l}}
\\&=&\Delta_{l}(-\theta^{-1})^{-u_{1}}\Delta_{l+1}(-\theta^{-1})^{u_{1}-u_{2}}...\Delta_{r}(-\theta^{-1})^{u_{r-l}}
\\&=&\Delta_{s}(-\theta^{-1}),
\end{eqnarray*}
where $s=(0,...,0,u_{1},...,u_{r-l})\in \mathbb{R}_{+}^{r}$.
\end{proof}

We come now to the construction of the Riesz measure $R_{s}$ for any
$s=(s_{1},...,s_{r})$ in the set $\Xi$. From the definition of
$\Xi$, there exists $u=(u_{1},...,u_{r})\in \mathbb{R}_{+}^{r}$ such
that
\begin{equation}\label{S4}
s_{1}=u_{1}\ \ \textrm{and} \ \
s_{i}=u_{i}+\frac{d}{2}(\varepsilon(u_{1})+...+\varepsilon(u_{i-1})).
\end{equation}
We will use $(u_{1},...,u_{r})$, to construct a partition $(A_{i})$
of the set $\{1,...,r\}$ such that, for all $i$, we have either
$u_{j}=0, \forall j \in A_{i}$ or $u_{j}>0, \forall j \in A_{i}$.
Such a partition is important in the description of the measure
$R_{s}$.

Consider the sequences of integers $i_{1} ,...,i_{k}$ and $j_{1}
,...,j_{k}$  built as follows:\\
$i_{1}=\inf\{p\geq 0   \ \ \textrm{such that} \ \  u_{p+1}\neq 0\}$,\\
$j_{l}=\inf \{p\geq 0  \ \textrm{such that} \ \
 u_{{i_l}+p+1}=0 \ \}  ,\,\,\, 1 \leq l \leq k $, \\
$i_{l}=\inf\{p\geq i_{l-1}+ j_{l-1}  \ \ \textrm{such that} \ \  u_{p+1}\neq 0\} , \,\,\,  2 \leq l \leq k $,\\

In this way, we get a partition of $u=(u_{1},...,u_{r})$ in the
form:
$$u=(\underset{i_{1}\ \ terms}{\underbrace{0,...,0}},\underset{j_{1}\ \ terms}{\underbrace{u_{i_{1}+1},...,u_{i_1 +j_{1}}}},..., {\underbrace{0,...,0}},\underset{j_{l}\ \ terms}{\underbrace{u_{i_{l}+1},...,u_{i_l + j_{l}}}},..., {\underbrace{0,...,0}},\underset{j_{k}\ \ terms}{\underbrace{u_{i_{k}+1},...,u_{i_k + j_{k}}}},...).$$
This partition of $u$ leads to the following partition of the set
$\{1,...,r\}$ defined by
$$ I'_{0}=\left \{ \begin{array}{l}
\emptyset \ \ \ \ \ \ \ \ \ \ \ \  if \ \ \ i_{1}=0 \\
\{1,...,i_{1}\}\ \ if \ \ \ i_{1}\neq0
\end{array} \right.$$
$$ \ \ \ \ \ \ \ \ \ \ \ I'_{l}=\{i_{l}+j_{l}+1,...,i_{l+1}\}\ \ if \ \ \ 1 \leq l\leq k-1 .$$
$$\ \ \ \ \ \ \ \ \ \ \ \ \ \ \ \ \ \ I'_{k}=\left \{ \begin{array}{l}
\emptyset \ \ \ \ \ if \ \ \ i_{k}+j_{k}=r \\
\{i_{k}+j_{k}+1,...,r\}\ \ if \ \ \ i_{k}+j_{k}<r\\
\end{array} \right.$$
and
$$\ \ \ \ \ \ \ \ \ \ \ \  I_{l}=\{i_{l}+1,...,i_{l}+j_{l}\}\ \ if \ \ \ 1 \leq l \leq k.$$
Thus we have that
$$\underset{1 \leq p \leq k}{\bigcup} I_{p}=\{i\ \ ; \ \ u_{i}\neq0\} \
\ \textrm{and}\ \ \underset{0 \leq p \leq k}{\bigcup} I'_{p}=\{i\ \
; \ \ u_{i}=0\}.$$ In conclusion, for an element $s$ in $\Xi$, we
associate $u=(u_{1},...,u_{r})$, $k$ in $\{1,...,r\}$, and the
partition of the set $\{1,...,r\}$ defined above. We also define for
$1 \leq l \leq k$,
$$u^{(l)}=\left(u_{i_{l}+1},u_{i_{l}+2}+\frac{d}{2},...,u_{i_{l}+j_{l}}+\frac{d}{2}(j_{l}-1)\right),$$
which is in $\mathbb{R}^{j_{l}}$ and the element of $\mathbb{R}^{r}$
\begin{equation}\label{S5}
s^{(l)}=\left(\underset{i_{l}\ \
terms}{\underbrace{0,...,0}},u^{(l)},\frac{d}{2}j_{l},...,\frac{d}{2}j_{l}\right),
\end{equation}
which can be written as
$$s^{(l)}=(\alpha^{(l)}_{1},...,\alpha^{(l)}_{r})$$
with
$$\left \{ \begin{array}{l}
\alpha^{(l)}_{p}=0 \ \ \ \ \ \ \ \ \ \ \ \ \ \ \ \ \ \ \ \ \ \ \ \ \ if \ \ \ 1 \leq p \leq i_{l} \\
\alpha^{(l)}_{i_{l}+p}=u_{i_{l}+p}+\frac{d}{2}(p-1) \ \ \ \ if \ \
\ 1 \leq p \leq j_{l}\\
\alpha^{(l)}_{p}=\frac{d}{2}j_l  \ \ \ \ \ \ \ \ \ \ \ \ \ \ \ \ \ \
\ \ \ \ \ \ if \ \ \ i_{l}+j_{l}+1 \leq p \leq r.
\end{array} \right.$$
The last term disappears if $i_{l}+j_{l}=r$.
\begin{proposition}
With the previous notations, for any $s$ in $\Xi$, we have
$$s=\underset{1 \leq l \leq k}{\sum}s^{(l)}.$$
\end{proposition}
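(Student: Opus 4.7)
The plan is to verify the identity coordinate-by-coordinate: fix an index $p\in\{1,\dots,r\}$, locate $p$ in the partition $I'_0, I_1, I'_1, I_2, \ldots, I_k, I'_k$, and check that $s_p = \sum_{l=1}^{k} s^{(l)}_p$ in each case using the explicit formula for $\alpha^{(l)}_p$ together with the defining relation (\ref{S4}).

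The two quantities we need to track are: (a) $s_p = u_p + \frac{d}{2}\#\{j<p:u_j\neq 0\}$, and (b) $s^{(l)}_p$, which is $0$ for $p\le i_l$, equal to $u_p + \frac{d}{2}(p-i_l-1)$ for $p\in I_l$, and equal to $\frac{d}{2}j_l$ for $p\ge i_l+j_l+1$. The basic observation driving the argument is monotonicity of the sequence $(i_l)$: by construction $i_{l+1}\ge i_l+j_l$, so the three regimes for $s^{(l)}_p$ are compatible across $l$, and the sets $I_l$ are pairwise disjoint.

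I would then split into four cases. First, if $p\in I'_0$, i.e.\ $p\le i_1$, then $u_j=0$ for all $j\le p$, so $s_p=0$; on the other hand $p\le i_l$ for every $l$, so every $s^{(l)}_p=0$. Second, if $p\in I_l$, write $p=i_l+p'$ with $1\le p'\le j_l$; then the number of nonzero $u_j$ with $j<p$ equals $j_1+\cdots+j_{l-1}+(p'-1)$, giving $s_p = u_p + \frac{d}{2}(j_1+\cdots+j_{l-1}+p'-1)$. For the right-hand side, $p\ge i_m+j_m+1$ when $m<l$ (contributing $\frac{d}{2}j_m$), $p=i_l+p'$ yields $s^{(l)}_p=u_p+\frac{d}{2}(p'-1)$, and $p\le i_m$ when $m>l$ (contributing $0$); summing matches $s_p$. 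Third, if $p\in I'_l$ for $1\le l\le k-1$, then $u_p=0$ and exactly $j_1+\cdots+j_l$ indices $j<p$ carry nonzero $u_j$; the right-hand side collects $\frac{d}{2}j_m$ for $m\le l$ and $0$ for $m>l$, again matching. Fourth, if $p\in I'_k$, the same calculation produces $\frac{d}{2}(j_1+\cdots+j_k)$ on both sides.

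The main obstacle is purely bookkeeping: ensuring that the inequalities $i_{l+1}\ge i_l+j_l$ and $i_l+j_l<i_{l+1}$ (strict when $I'_l$ is nonempty) are handled correctly so that each $p$ falls into exactly one cell of the partition and contributes the right regime of $s^{(l)}_p$ for every $l$. Once this is laid out carefully, the verification reduces to the elementary identity $\#\{j<p:u_j\neq 0\} = \sum_{m:i_m+j_m<p}j_m + (p-i_l-1)\mathbf{1}_{p\in I_l}$, which is immediate from the definition of the blocks.
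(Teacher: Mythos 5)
Your proposal is correct and follows essentially the same route as the paper's own proof: a coordinate-by-coordinate verification split into the same four cases ($p\in I'_0$, $p\in I_l$, $p\in I'_l$, $p\in I'_k$), with the same bookkeeping identity $\varepsilon(u_1)+\cdots+\varepsilon(u_{p-1})=j_1+\cdots+j_{l-1}+(p-i_l-1)$ in the main case. No substantive differences to report.
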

\begin{proof}
Recall that the corresponding vector $u=(u_{1},...,u_{r})$ to a
given $s$ in $\Xi$ is such that
$$s_{1}=u_{1}\ \ \textrm{and} \ \
s_{i}=u_{i}+\frac{d}{2}(\varepsilon(u_{1})+...+\varepsilon(u_{i-1}))$$
 Given $m$ in
$\{1,...,r\}$, we distinguish between four cases according to its
position in the elements $I'_{0}$, $I'_{l}$, $I'_{k}$ and $I_{l}$ of
the partition of $\{1,...,r\}$.

If $m\in I'_{0}$, then $s_{m}=0$ and $\alpha^{(l)}_{m}=0$, for $1
\leq l \leq k$, since $ m\leq i_1 \leq i_l
 $ so that we have  $$s_{m}=\underset{1 \leq l \leq
k}{\sum}\alpha^{(l)}_{m}$$.

If $m\in I_{l}$ with $ 1\leq l \leq k$, then $i_{l}+1 \leq m \leq i_{l}+j_{l}$, . It follows
that
$$\left \{ \begin{array}{l}
\alpha^{(p)}_{m}=\frac{d}{2}j_{p} \ \ \ \ \ \ \ if \ \ \ 1 \leq p \leq l-1, \ \ \ \ since \ \ i_{p}+j_{p} \leq i_{l-1}+j_{l-1} < i_{l}<m \\
\alpha^{(p)}_{m}=u_{m}+\frac{d}{2}(m-i_{l}-1) \ \ \ \ \ \ \ if \ \ \ p=l, \ \ \ \ since \ \ 1 \leq m-i_{l} \leq j_{l}\\
\alpha^{(p)}_{m}=0 \ \ \ \ \ \ \ if \ \ \ l+1\leq p \leq k, \ \ \ \
since \ \ m \leq i_{l}+j_{l} < i_{l+1}<i_{p}.
\end{array} \right.$$
Therefore $$\underset{1 \leq p \leq
k}{\sum}\alpha^{(p)}_{m}=u_m +\frac{d}{2}(m-i_{l}-1 +j_{1}+...+j_{l-1})=s_{m}$$.

If $m\in I'_{l}$, with $1 \leq l \leq k-1$, then $ i_{l}+j_{l}+1 \leq
m \leq i_{l+1}$. Il follows that
$$\left \{ \begin{array}{l}
\alpha^{(p)}_{m}=\frac{d}{2}j_{p} \ \ \ \ \ \ \ if \ \ \ 1 \leq p \leq l, \ \ \ \ since \ \ i_{1}+j_{1}<... < i_{l}+j_{l} < m \\
\alpha^{(p)}_{m}=0 \ \ \ \ \ \ \ if \ \ \ l+1\leq p \leq k, \ \ \ \
since \ \ m \leq i_{l+1}<i_{l+2}<... <i_{k}.
\end{array} \right.$$
As $u_{m}=0$, we obtain
$$\underset{1 \leq p \leq k}{\sum}\alpha^{(p)}_{m}=\frac{d}{2}(j_{1}+...+j_{l})=s_{m}$$.

If $m\in I'_{k}$, then $ i_{k}+j_{k}+1 \leq m \leq r$. Since
$i_{1}+j_{1}<... < i_{k}+j_{k} < m$, it follows that
$$\alpha^{(p)}_{m}=\frac{d}{2}j_{p} \  \ 1 \leq p
\leq k.$$ Thus
$$\underset{1 \leq p \leq
k}{\sum}\alpha^{(p)}_{m}=\frac{d}{2}(j_{1}+...+j_{k})=s_{m}$$.
\end{proof}

To continue our description of the Riesz Mesures, we require some
further notations. For $s$ in $\Xi$, and $1\leq l \leq k$, where $k$
is the integer corresponding to $s$ defined above, we set

$$\overline{c}_{i_{l}}=c_{i_{l}+1}+...+c_{r}$$
$$\ \ \ \overline{c}_{i_{l},j_{l}}=c_{i_{l}+1}+...+c_{i_{l}+j_{l}}$$
$\overline{c}_{i_{l},j_{l}}$ is an idempotent of rank $j_{l}$ in
$V(\overline{c}_{i_{l}},1)$. \\Let
$\widehat{V}(\overline{c}_{i_{l},j_{l}},1)$
 and $\widehat{V}(\overline{c}_{i_{l},j_{l}},\frac{1}{2})$ be the subspaces of $V(\overline{c}_{i_{l}},1)$ corresponding to
 the eigenvalues $1$ and $\frac{1}{2}$, and let
 $\widehat{\Omega}_{\overline{c}_{i_{l},j_{l}}} $ be the symmetric cone associated to
 $\widehat{V}(\overline{c}_{i_{l},j_{l}},1)$.

Consider the map
\begin{center}
    $\alpha : \widehat{\Omega}_{\overline{c}_{i_{l},j_{l}}} \times \widehat{V}(\overline{c}_{i_{l},j_{l}}, \frac{1}{2})\rightarrow V( \overline{c}_{i_{l}},1) $ \
    ; \  $(x,v) \longmapsto
    x+2v\sqrt{x}+(\overline{c}_{i_{l}}-\overline{c}_{i_{l},j_{l}})v^{2}$,
\end{center}
and let $i$ be the canonical injection of
$V(\overline{c}_{i_{l}},1)$ into $V$. \\We now define the measure
$$\gamma_{u^{(l)}}(dx,dv)=\frac{\Delta_{u^{(l)}}^{\overline{c}_{i_{l},j_{l}}}(x)(\det_{\overline{c}_{i_{l},j_{l}}}x)^{-1-(j_{l}-1)\frac{d}{2}}}{(2\pi)^{j_{l}(r-i_{l}-j_{l})\frac{d}{2}}{\Gamma_{\widehat{\Omega}_{\overline{c}_{i_{l},j_{l}}}}}(u^{(l)})}\mathbf{1}_{\widehat{\Omega}_{\overline{c}_{i_{l},j_{l}}} \times\widehat{V}(\overline{c}_{i_{l},j_{l}},\frac{1}{2})}(x,v)dxdv ,$$
and we denote $\mu_{u^{(l)}}$ the image of $\gamma_{u^{(l)}}$ by the
map $i \circ \alpha$. \\ We are now ready to state and prove our
main result.
\begin{theorem}
For all $s$ in $\Xi$, we have
$$R_{s}=\mu_{u^{(1)}}\star...\star\mu_{u^{(k)}},$$
where $\star$ is the convolution product.
\end{theorem}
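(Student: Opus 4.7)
The plan is to show that $R_{s}$ and the convolution $\mu_{u^{(1)}}\star\cdots\star\mu_{u^{(k)}}$ have the same Laplace transform on $-\Omega$, and then to invoke the uniqueness part of Gindikin's Theorem 2.4. Since the Laplace transform of a convolution is the product of the individual Laplace transforms, and since $L_{R_{s}}(\theta)=\Delta_{s}(-\theta^{-1})$, the task reduces to establishing
$$\prod_{l=1}^{k}L_{\mu_{u^{(l)}}}(\theta)=\Delta_{s}(-\theta^{-1}).$$

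The core step is to identify $L_{\mu_{u^{(l)}}}$ with $\theta\mapsto\Delta_{s^{(l)}}(-\theta^{-1})$ for each $l\in\{1,\ldots,k\}$. Recall that $\mu_{u^{(l)}}$ is the image of $\gamma_{u^{(l)}}$ by $i\circ\alpha$. I would first argue inside the Jordan sub-algebra $V(\overline{c}_{i_{l}},1)=V(e-\sigma_{i_{l}},1)$, which has rank $r-i_{l}$ and inherits the Jordan frame $(c_{i_{l}+1},\ldots,c_{r})$; here $\overline{c}_{i_{l},j_{l}}$ plays the role of the "$\sigma_{j_{l}}$" of the sub-algebra. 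By construction of the partition, $u_{i_{l}+p}>0$ for $1\le p\le j_{l}$, so the components of $u^{(l)}$ satisfy the strict inequalities $u^{(l)}_{p}>(p-1)\frac{d}{2}$ required by Theorem 3.2. Applying Theorem 3.2 in the sub-algebra shows that $\alpha\gamma_{u^{(l)}}$ is a measure on $V(\overline{c}_{i_{l}},1)$ whose Laplace transform on $-\Omega_{\overline{c}_{i_{l}}}$ equals $\Delta^{\overline{c}_{i_{l}}}_{(u^{(l)},\,\frac{d}{2}j_{l},\ldots,\frac{d}{2}j_{l})}(-\theta^{-1})$. Invoking Theorem 3.4 with its "$l$" set equal to $i_{l}$ then lifts the measure through $i$ back to $V$ and yields
$$L_{\mu_{u^{(l)}}}(\theta)=\Delta_{s^{(l)}}(-\theta^{-1}),$$
where $s^{(l)}$ is exactly the vector introduced just before Proposition 3.5.

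The remaining argument is purely formal. Multiplicativity of the generalized power, $\Delta_{s+s'}(x)=\Delta_{s}(x)\Delta_{s'}(x)$, combined with Proposition 3.5 which asserts $s=\sum_{l=1}^{k}s^{(l)}$, yields
$$\prod_{l=1}^{k}\Delta_{s^{(l)}}(-\theta^{-1})=\Delta_{s}(-\theta^{-1}),$$
so the two Laplace transforms agree on $-\Omega$ and the theorem follows by the uniqueness part of Theorem 2.4.

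The main obstacle will be the bookkeeping required to legitimately apply Theorems 3.2 and 3.4 inside the sub-algebra $V(\overline{c}_{i_{l}},1)$. One has to verify that the Jordan constant $d$ is inherited by the sub-algebra (it is, since the Peirce spaces $V_{ij}$ with $i_{l}<i,j\le r$ are unchanged), that the rank of $\overline{c}_{i_{l},j_{l}}$ within the sub-algebra is indeed $j_{l}$, and that the data $(\gamma_{u^{(l)}},\alpha)$ defined just before the theorem coincides, under the canonical identifications, with the instance of $(\gamma_{l},\alpha)$ appearing in Theorem 3.2 applied in that sub-algebra. Once these identifications are established, both theorems apply verbatim.
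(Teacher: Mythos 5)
Your proposal is correct and follows essentially the same route as the paper: apply Theorem 3.2 inside the sub-algebra $V(\overline{c}_{i_{l}},1)$ to compute the Laplace transform of $\alpha\gamma_{u^{(l)}}$, lift it to $V$ via Theorem 3.4 to obtain $L_{\mu_{u^{(l)}}}(\theta)=\Delta_{s^{(l)}}(-\theta^{-1})$, and conclude by multiplicativity of the generalized power together with Proposition 3.5 and injectivity of the Laplace transform. Your explicit verification that the components of $u^{(l)}$ satisfy $u^{(l)}_{p}>(p-1)\frac{d}{2}$ is a useful detail the paper leaves implicit, but it does not change the argument.
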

\begin{proof}
We need to show that the Laplace transform of
$\mu_{u^{(1)}}\star...\star\mu_{u^{(k)}}$ defined in an element
 $\theta$ of $-\Omega$ is equal to $\Delta_s(-\theta^{-1})$. \\
 For $1\leq l\leq k$, let $\theta=\theta_{1}+\theta_{12}+\theta_{0}$ be the Peirce
decomposition of $\theta$ with respect to $\overline{c}_{i_{l}}$. If
we denote $\mu'_{u^{(l)}}$ the image of $\gamma_{u^{(l)}}$ by the
map $\alpha$, then according to Theorem $(3.2)$, we have that
$$L_{\mu'_{u^{(l)}}}(-\theta_{0})=\Delta_{s'^{(l)}}^{\overline{c}_{i_{l}}}(-\theta_{0}^{-1}),$$
where $s'^{(l)}=(u^{(l)},\frac{d}{2}j_{l},...,\frac{d}{2}j_{l})\in
\mathbb{R}^{r-i_{l}}$. On the other hand, as $\mu_{u^{(l)}}$ is the
image of $\mu'_{u^{(l)}}$ by the canonical injection of
$V(\overline{c}_{i_{l}},1)$ into $V$, Theorem $(3.4)$ implies that
$$L_{\mu_{u^{(l)}}}(\theta_{0})=\Delta_{s^{(l)}}(-\theta^{-1}),$$
where $s^{(l)}=(0,...0,s'^{(l)})\in \mathbb{R}^{r}$. \\Therefore the
Laplace transform of $\mu_{u^{(1)}}\star...\star\mu_{u^{(k)}}$ in
$\theta \in -\Omega$ is
\begin{eqnarray*}L_{\mu_{u^{(1)}}\star...\star\mu_{u^{(k
)}}}(\theta)&=&\underset{1 \leq l \leq
k}{\Pi}L_{\mu_{u^{(l)}}}(\theta)\\
&=&\underset{1 \leq l \leq k}{\Pi}\Delta_{s^{(l)}}(-\theta^{-1})
\\
&=&\Delta_{\underset{1 \leq l \leq k}{\sum}s^{(l)}}(-\theta^{-1})\\
&=&\Delta_{s}(-\theta^{-1}),
\end{eqnarray*}
which is the desired result
\end{proof}
\begin{corollary}
a) The measure $\mu_{u^{(l)}}$ is supported by the set
$$J'_{u^{(l)}}=\{x\in V(\overline{c}_{i_{l}},1)\ \ \textrm {such that}\ \  x\in\overline{\Omega} \ \ \textrm{ and}\ \ rank x=j_{l} \}$$
\ \ \ \ \ \ \ \ \ \ \ \ \ \ \ \ \ \ \ \ b) The measure $R_{s}$ is
supported by the set
$$J'_{u^{(1)}}+...J'_{u^{(k)}}\subseteq V(\overline{c}_{1},1)\cap\overline{\Omega}$$
\end{corollary}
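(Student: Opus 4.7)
The plan is to exploit the concrete description of $\mu_{u^{(l)}}$ as a pushforward measure and the convolution formula $R_{s}=\mu_{u^{(1)}}\star\ldots\star\mu_{u^{(k)}}$ given by the main theorem.

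For part (a), I would start from the construction: $\mu_{u^{(l)}}$ is the image of $\gamma_{u^{(l)}}$ under $i\circ\alpha$, and the density of $\gamma_{u^{(l)}}$ carries the indicator of $\widehat{\Omega}_{\overline{c}_{i_{l},j_{l}}}\times\widehat{V}(\overline{c}_{i_{l},j_{l}},\frac{1}{2})$, so $\mu_{u^{(l)}}$ is concentrated on the image $(i\circ\alpha)(\widehat{\Omega}_{\overline{c}_{i_{l},j_{l}}}\times\widehat{V}(\overline{c}_{i_{l},j_{l}},\frac{1}{2}))$. The key observation is that $\alpha$ is precisely the bijection of Proposition 3.1 (iv), but applied inside the Euclidean simple Jordan subalgebra $V(\overline{c}_{i_{l}},1)$ with respect to the idempotent $\overline{c}_{i_{l},j_{l}}$, which has rank $j_{l}$ in that subalgebra. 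Proposition 3.1 then identifies the image of $\alpha$ with the analogue of $J_{j_{l}}$ inside the subalgebra: the set of elements of $\overline{\Omega}_{\overline{c}_{i_{l}}}$ of rank $j_{l}$ with nonzero leading principal minor. Since $\overline{\Omega}_{\overline{c}_{i_{l}}}\subseteq\overline{\Omega}$ (by Proposition 2.1 (i) extended to the closed cone) and the rank of an element of $V(\overline{c}_{i_{l}},1)$ computed in the subalgebra coincides with its rank in $V$ (both equal the number of nonzero spectral coefficients), this image sits inside $J'_{u^{(l)}}$, which gives (a).

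For part (b), I would combine (a) with the convolution formula. The support of a finite convolution is contained in the set-theoretic sum of the supports of the factors, so by (a) the support of $R_{s}$ is contained in $J'_{u^{(1)}}+\ldots+J'_{u^{(k)}}$. To see this sum lies in $V(\overline{c}_{i_{1}},1)\cap\overline{\Omega}$, I would note that the strict inequalities $i_{1}<i_{2}<\ldots<i_{k}$ produce a chain of idempotents $\overline{c}_{i_{1}}\geq\overline{c}_{i_{2}}\geq\ldots\geq\overline{c}_{i_{k}}$ and hence a chain of subalgebras $V(\overline{c}_{i_{l}},1)\subseteq V(\overline{c}_{i_{1}},1)$; consequently each $J'_{u^{(l)}}$ lies in $V(\overline{c}_{i_{1}},1)\cap\overline{\Omega}$. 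Since $V(\overline{c}_{i_{1}},1)$ is a linear subspace and $\overline{\Omega}$ is a convex cone, in particular closed under addition, the sum $J'_{u^{(1)}}+\ldots+J'_{u^{(k)}}$ remains in $V(\overline{c}_{i_{1}},1)\cap\overline{\Omega}$.

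The only delicate point, and the place where I would spend most care, is the transfer of Proposition 3.1 from the ambient algebra $V$ to the subalgebra $V(\overline{c}_{i_{l}},1)$. This is legitimate because $V(\overline{c}_{i_{l}},1)$ is itself a Euclidean simple Jordan algebra of rank $r-i_{l}$ equipped with the Jordan frame $(c_{i_{l}+1},\ldots,c_{r})$, and $\overline{c}_{i_{l},j_{l}}$ plays inside it the role played by $\sigma_{j_{l}}$ in the statement of Proposition 3.1; the spaces $\widehat{V}(\overline{c}_{i_{l},j_{l}},1)$, $\widehat{V}(\overline{c}_{i_{l},j_{l}},\frac{1}{2})$ and the cone $\widehat{\Omega}_{\overline{c}_{i_{l},j_{l}}}$ are exactly the Peirce components and symmetric subcone required. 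Once this identification is recorded, both (a) and (b) follow formally from the quoted structural results, with no further computation needed.
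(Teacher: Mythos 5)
Your proposal is correct and follows essentially the same route as the paper: part (a) is the transfer of the earlier corollary (that $\mu_{l}$ lives on the rank-$l$ stratum, via the bijection of Proposition 3.1 (iv)) to the subalgebra $V(\overline{c}_{i_{l}},1)$ with $\overline{c}_{i_{l},j_{l}}$ in the role of $\sigma_{j_{l}}$, and part (b) is the convolution-support inclusion combined with the chain $V(\overline{c}_{i_{1}},1)\supset\cdots\supset V(\overline{c}_{i_{k}},1)$. You merely spell out the details (rank in the subalgebra versus in $V$, closure of $\overline{\Omega}$ under addition) that the paper leaves implicit.
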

\begin{proof}

a) Follows from Corollary 1.1.

b) It suffises to observe that $V(\overline{c}_{1},1)\supset
V(\overline{c}_{2},1)\supset ...\supset V(\overline{c}_{k},1)$
\end{proof}
\begin{remark}

a) When $s$ is in $\Xi$ such that $s_{i}>(i-1)\frac{d}{2}
, \ \ 1\leq
i \leq r$, then the integer $k$ corresponding to $s$ is equal to 1.
In this case $R_{s}=\mu_{u^{(1)}}$, it is concentrated on $\Omega$.

\ \ \ \ \ \ \ \ \ \ \ \ b) When $s$ is in
$\Xi\backslash\prod_{i=1}^{r}](i-1)\frac{d}{2},+\infty$, , then the
integer $k$ corresponding to $s$ is strictly greater than 1 and
$j_{1}+...+j_{k}<r$. The measure $R_s$ is in this case concentrated
on $J'_{u^{(1)}}+...J'_{u^{(k)}}$ whose the element are of rank less
than or equal to $j_{1}+...+j_{k}$. As $j_{1}+...+j_{k}<r$, $R_s$ is
supported by the boundary $\partial\Omega$ of the symmetric cone
$\Omega$.
\end{remark}

\noindent \textbf{References}\\
$[1]$ J. Faraut, A. Kor\'{a}nyi. (1994). Analysis on symmetric cones, Oxford Univ,
Press.\\
$[2]$ S.G. Gindikin. (1964). Analysis on homogeneous domains. Russian Math.
Surveys. 29,) 1-89.\\
$[3]$ A. Hassairi, S.Lajmi. (2001). Riesz exponential families on symmetric cones.
J. Theoret. Probab. Vol 14, 927-948.\\
$[4]$ A. Hassairi, S.Lajmi. (2004). Classification of Riesz exponential families on a
symmetric cone by invariance properties. J. Theoret. Probab. Vol 17, No.3.\\
$[5]$ M. Lassalle. (1987). Algèbre de Jordan et ensemble de Wallah. Invent. Math.
 89, 375-393.\\
$[6]$ H. Massam, E. Neher. (1997). On transformation and determinants of Wishart
variables on symmetric cones. J. Theoret. Probab. Vol 10, 867-902.

\end{document}